\newtheoremstyle{theorem}
  {15pt}          
  {15pt}  
  {\sl}  
  {\parindent}
  {\sc}  
  {. }    
  { }    
  {}     
\def\1{\mathbf{1}}
\newtheoremstyle{defi}
  {15pt}          
  {15pt}  
  {\rm}  
  {\parindent}     
  {\sc}  
  {. }    
  { }    
  {}     
\theoremstyle{defi}
\newtheorem{lem} {Lemma}
\newtheorem{thm} {Theorem}
\newtheorem{prop}{Proposition}
\newtheorem{corr}{Corollary}
\begin{document}


 \title{Well-posedness and regularity of the Cauchy problem for nonlinear fractional in time and space equations}

\author{V. N. Kolokoltsov$^1$\thanks{Partially supported by the IPI RAN grants RFBR 11-01-12026 and 12-07-00115,
and by the grant 4402 of the Ministry of Education and Science of Russia}, M. A. Veretennikova$^2$\thanks{Supported by EPSRC grant EP/HO23364/1 through MASDOC, University of Warwick, UK} \\ 
$^1$Department of Statistics, $^2$ Mathematics Institute \\
University of Warwick\\
Coventry, CV4 7AL, UK \\
$^1$v.kolokoltsov@warwick.ac.uk, $^2$m.veretennikova@warwick.ac.uk}

\maketitle

\begin{abstract}
The purpose is to study the Cauchy problem for non-linear in time and space pseudo-differential equations. These include the fractional in time versions of HJB equations governing the controlled scaled CTRW. As a preliminary step which is of independent interest we analyse the corresponding linear equation proving its well-posedness and smoothing properties.

\medskip

{\it Key Words and Phrases}: fractional calculus, Caputo derivative, Mittag-Leffler functions, fractional Hamilton-Jacobi-Bellman type equations 

\end{abstract}


\section*{Introduction}

The purpose of this paper is to study well-posedness of the Cauchy problem for the fractional in time and space pseudo-differential equation
\begin{align}\label{H}
D^{* \beta}_{0,t}f(t,y)=-a(-\Delta)^{\alpha/2}f(t,y)+ H(t,y,\nabla f(t,y))
\end{align}

where $y \in \mathbb{R}^{d}, t \ge 0$, $\beta \in (0,1), \alpha \in (1,2]$, $H(t,y,p)$ is a Lipschitz function in all of its variables, and $f(0,y)=f_{0}(y)$ is known and bounded, and $a$ is a constant, $a > 0$. Here $\nabla$ denotes the gradient with respect to the spatial variable. For a function dependent on several spatial variables, say $x, y$, we may occasionally indicate the variable with respect to which the gradient is taken, by a subscript, $\nabla_{x}$. We denote by $D^{* \beta}_{0,t}$ the Caputo derivative:

\begin{equation}\label{Cap}
D^{* \beta}_{0,t}f(t,y)=\frac{1}{\Gamma(1-\beta)}\int_{0}^{t}\frac{d f(s,y)}{ds}(t-s)^{-\beta}ds,
\end{equation}

whilst $-(-\Delta)^{\alpha/2}$ is the fractional Laplacian
\begin{align}
-(-\Delta)^{\alpha/2}f(t,y)= C_{d, \alpha}\int_{\mathbb{R}^{d}}\frac{f(t,y) - f(t,x)}{|y-x|^{d + \alpha}}dx,
\end{align}
where $C_{d, \alpha}$ is a normalizing constant. Extension of our results for (\ref{H}) to the case where $H=H(t,y,f(t,y),\nabla f(t,y))$ is straightforward and we omit it here.


As a preliminary analysis we establish the regularity properties of the linear equations of the form
\begin{align}\label{h}
D_{0,t}^{* \beta}f(t,y)=-a(-\Delta)^{\alpha/2}f(t,y) + h(t,y),
\end{align}

with a given function $h$, an initial condition $f(0,y)=f_{0}(y)$, $\beta \in (0,1)$, $\alpha \in (1,2]$, and a constant $a > 0$. This allows one to reduce the analysis of (\ref{H}) to a fixed point problem. Section 3 is devoted to the linear problem (\ref{h}) and in section 4 we formulate and prove our main results for equation (\ref{H}).

\smallskip

In this section we present a literature review. Among researchers who studied solutions to fractional differential equations are \cite{mainardi}, \cite{meerschaert}, \cite{podlubny1999fractional}, \cite{bajlekova},  \cite{kexue}, \cite{lizama}, \cite{diethelm}, \cite{matar}, \cite{kochubei}, \cite{kilbas}, \cite{leonenko}, \cite{agarwal}. More results and reviews can be found in references therein. Fractional differential equations appear for example in modelling processes with memory, see \cite{uchaikin}, \cite{tarasov}, \cite{machado}, \cite{Escalas}.

Several authors solve fractional differential equations using Laplace transforms in time, see \cite{kexue}, \cite{kilbas} and \cite{lizama} for example. 

The book \cite{diethelm} covers analysis for Caputo time-fractional differential equations with the parameter $\beta > 0$, for example
\begin{align}
D^{* \beta}_{0,t}y(x)=-\mu y(x) + q(x),
\end{align}
with $y(0)=y_{0}^{(0)}$, $Dy(0)=y_{0}^{(1)}$, $\beta \in (1,2)$, $\mu > 0$.

In \cite{bajlekova} the theory for fractional differential equations in $L^{p}$ spaces is developed. Well-posedness of (\ref{h}) in $L^{p}$ may be deduced from there.

In \cite{meerschaert} the authors consider classical solutions for fractional Cauchy problems in bounded domains $D \in \mathbb{R}^{d}$ with Dirichlet boundary conditions.

In \cite{zhou} one may find the analysis for the non-local Cauchy problem in a Banach space, where instead of $-(-\Delta)^{\alpha/2}$ there is a general infinitesimal generator of a strongly continuous semigroup of bounded linear operators. The authors present conditions that need to hold to ensure existence of mild forms of the fractional differential equation.

The paper \cite{ma} establishes asymptotic estimates of solutions to the following fractional equation and its similar versions:
\begin{align}
D^{* \alpha}_{0,t}u(x,t)=a^{2}\frac{d^{2}u(x,t)}{dx^{2}}, 
\end{align}

for $t > 0, x \in \mathbb{R}$, $\alpha \in (0,1)$, $u(x,0)=\phi(x)$, $\lim_{|x| \rightarrow + \infty}u(x,t)=0$, however the case of the fractional Laplacian is not included and there is no $h(x,t)$ term on the right hand side (RHS).

In \cite{kokurin} the author studies the uniqueness of a solution to
\begin{align}
D^{* \alpha}_{0,t}u(t)=Au(t),
\end{align}
where $t>0, u(0)=u_{0}$, and $A$ is an unbounded closed operator in a Banach space, $\alpha \in (0,1)$. However there is no non-homogeneity term $h(t)$ on the RHS. For solvability of linear fractional differential equations in Banach spaces one may see \cite{gorenflo}, where

\begin{align}\label{lin}
D^{* \alpha}_{0,t}x(t)=Ax(t), \mbox{ for } m-1 < \alpha \le m \in \mathbb{N},
\end{align}
and $\frac{d^{k}}{dt^{k}}x(t)|_{t=0}=\xi_{k}$, for $k = 0, \ldots, m-1$. The authors give sufficient conditions under which the set of initial data $\xi_{k}$ for $k = 0, \ldots, m-1$ provides a solution to (\ref{lin}) of the form $\sum_{k=0}^{m-1}t^{k}E_{\alpha, k+1}(At^{\alpha})\xi_{k}$. In particular, these conditions depend on Roumieu, Gevrey and Beurling spaces related to the operator $A$. 

In \cite{tao} the authors use fixed point theorems to prove existence and uniqueness of a positive solution for the problem

\begin{align}
D^{\alpha}_{0,t}x(t)=f(t,x(t), -D^{\beta}_{0,t}x(t)), t \in (0,1),
\end{align}
with non-local Riemann-Stieltjes integral condition 
\begin{align}
D^{\beta}_{0,t}x(0)=D^{\beta + 1}_{0,t}x(0)=0, 
\end{align}
and $D^{\beta}_{0,t}x(1)=\int_{0}^{1}D^{\beta}_{0,t}x(s)dA(s)$, where $A$ is a function of bounded variation, $\alpha \in (2,3], \beta \in (0,1), \alpha - \beta > 2$. In \cite{tao} there are references to papers where fractional differential equations are inspected with the help of various fixed point theorems. Our analysis also includes a fixed point theorem, however its use and the problem itself are different from the one in \cite{tao}.

In \cite{kochubei} there is a construction and investigation of a fundamental solution for the Cauchy problem with a regularised fractional derivative $D^{\alpha}_{0,t, reg}$, and $\alpha \in (0,1)$ defined by

\begin{align}\label{reg}
D^{\alpha}_{0,t, reg}u(t,x)=\frac{1}{\Gamma(1-\alpha)}\left[\frac{\partial}{\partial t}\int_{0}^{t}(t-\tau)^{-\alpha}u(\tau, x)d\tau - t^{-\alpha}u(0,x) \right].
\end{align}
Note that
\begin{align}
D^{\alpha}_{0,t}u(t,x)=\frac{1}{\Gamma(1-\alpha)}\frac{\partial}{\partial t}\int_{0}^{t}(t-\tau)^{-\alpha}u(\tau, x)d\tau
\end{align}

is the definition of the Riemann-Liouville fractional derivative. Since $D^{* \alpha}_{0,t}f(t,x)= D^{\alpha}_{0,t}f(t,x) - \frac{t^{-\alpha}}{\Gamma(1-\alpha)}f(0,x)$, the regularised derivative in (\ref{reg}) is in fact identical to our definition of the Caputo derivative in (\ref{Cap}).

The problem studied in \cite{kochubei} is
\begin{align}
D^{\alpha}_{0, t, reg}u(t,x)-Bu(t,x)=f(t,x),
\end{align}
$t \in (0,T], x \in \mathbb{R}^{n}$, where 

\begin{align}
B = \sum_{i,j = 1}^{n}a_{ij}(x)\frac{\partial^{2}}{\partial x_{i} \partial x_{j}} + \sum_{j=1}^{n}b_{j}(x)\frac{\partial}{\partial x_{j}} + c(x)
\end{align}
with bounded real-valued coefficients. Our analysis goes beyond to include $B=-a(-\Delta)^{\alpha/2}$, with $a > 0$.

In \cite{pskhu} in particular the fundamental solution to the multi-time fractional differential equation
\begin{align}\label{psk}
\sum_{k=1}^{m}\lambda_{k}D^{* \beta_{k}}u(t,y) - \Delta_{x}u(t,y)=f(t,y),
\end{align}
is presented, for  $t = (t_{1}, \ldots, t_{n}) \in \mathbb{R}^{n}, y = (y_{1}, \ldots, y_{m}) \in \mathbb{R}^{m}$, and $\lambda = (\lambda_{1}, \ldots, \lambda_{m}) \in \mathbb{R}^{m}$, whilst $\Delta_{x}$ is the standard Laplacian operator and $\beta_{k} \in (0,1)$ for all $1\ge k \le m$.  There is also the proof of that the fundamental solution for (\ref{psk}) is unique. The uniqueness result covers a more broad range of fractional differential equations involving Dzhrbashyan-Nersesyan fractional in time differential equations. In our case there are fractional operators with respect to both spacial and temporal variables.

Denote a bounded domain by $D$. Taking $\alpha \in (0,2)$, $\beta \in (0,1)$ the paper \cite{chen} develops strong solutions to the equation
\begin{align}
D^{* \beta}_{0,t}u(t,x)=\Delta^{\alpha/2}_{x}u(t,x),
\end{align}

for $x \in D$, $t > 0$, $u(0,x)=f(x)$ for $x \in D$ and $u(t,x)=0$ for $x \in D^{c}$, $t > 0$.

Our approach to the non-linear FDE seems to be different and includes the fractional Laplacian $-(-\Delta)^{\alpha/2}$ instead of the standard one $\Delta_{y}$. We extend to the scenario with the RHS term including $H(t,y,\nabla f(t,y))$, although we concentrate on the case with only one fractional time derivative $D^{* \beta}_{0,t}$. 

\section{Regularity of linear fractional dynamics}
Our analysis of equation (\ref{h}) is based on the Fourier transform in space, where for a function $g(y)$ its Fourier transform will be defined in the following way

\begin{align}
\hat{g}(p)=\int_{R^{d}}e^{-ipy}g(y)dy.
\end{align}

Applying the Fourier transform to (\ref{h}) yields
\begin{align}
D^{* \beta}_{0,t}\hat{f}(t,p)=-a|p|^{\alpha}\hat{f}(t,p) + \hat{h}(t,p).
\end{align}






This is a standard linear equation with the Caputo fractional derivative. For continuous $h$ its solution is given by
\begin{align}\label{fh}
\hat{f}(t,p)=\hat{f}_{0}(p)E_{\beta,1}(-a|p|^{\alpha}t^{\beta}) + \int_{0}^{t}(t-s)^{\beta - 1}E_{\beta, \beta}(-a(t-s)^{\beta}|p|^{\alpha}))\hat{h}(s,p)ds,
\end{align}

where $E_{\beta, 1}$ and $E_{\beta, \beta}$ are Mittag-Leffler functions, see formulas $(7.3) - (7.4)$ in \cite{diethelm}. 

Let us recall that the Mittag-Leffler functions are defined for $Re(\beta) > 0$, and $\gamma, z \in \mathbb{C}$:

\begin{align}\label{e}
E_{\beta, \gamma}(z)=\sum_{i=1}^{\infty}\frac{z^{k}}{\Gamma(\beta k + \gamma)}.
\end{align}

We will use the following connection between $E_{\beta, \beta}$ and $E_{\beta, 1}$:
 
\begin{align}\label{betaone}
x^{\beta-1}E_{\beta, \beta}(-a|p|^{\alpha}x^{\beta})=-\frac{1}{a|p|^{\alpha}}\frac{d}{dx}E_{\beta,1}(-a|p|^{\alpha}x^{\beta}).
\end{align}
To prove (\ref{betaone}) one may use the representation of $E_{\beta,1}(-a|p|^{\alpha}x^{\beta})$ in (\ref{e}) and differentiate with respect to $x$ term by term. Now we present two convenient notations for further analysis. Let us denote 
\begin{align}\label{S}
S_{\beta, 1}(t,y)= \frac{1}{(2\pi)^{d}}\int_{\mathbb{R}^{d}}e^{ipy}E_{\beta, 1}(-a|p|^{\alpha}t^{\beta})dp
\end{align}

and

\begin{align}\label{Gb}
G_{\beta}(t,y)=\frac{t^{\beta-1}}{(2\pi)^{d}}\int_{\mathbb{R}^{d}}e^{ipy}E_{\beta,\beta}(-a|p|^{\alpha}t^{\beta})dp.
\end{align}

Using (\ref{betaone}) we can re-write it as

\begin{align}\label{gmitone}
G_{\beta}(t,y)=-\frac{1}{(2\pi)^{d}}\int_{\mathbb{R}^{d}}e^{ipy}\frac{1}{a|p|^{\alpha}}\frac{d}{dt}E_{\beta,1}(-a|p|^{\alpha}t^{\beta})dp.
\end{align} 

Applying the inverse Fourier transform to (\ref{fh}) 
we obtain:

\begin{align}\label{f}
f(t,y)=\int_{\mathbb{R}^{d}}S_{\beta,1}(t,y-x)f_{0}(x)dx + \int_{0}^{t} \int_{\mathbb{R}^{d}}G_{\beta}(t-s, y-x)h(s,x)dx ds.
\end{align}

It is natural to call this integral equation the mild form of the fractional linear equation (\ref{h}). In particular we see that the function $S_{\beta, 1}(t, y-y_{0})$ is the solution of equation (\ref{h}) with $f_{0}(y)= \delta(y-y_{0})$ and $h(t,y)=0$. On the other hand the function $G_{\beta}(t-t_{0}, y-y_{0})$ is the solution of (\ref{h}) with $f_{0}(y)=0$ and $h(t,y)=\delta(t-t_{0}, y-y_{0})$. Thus the functions $S_{\beta, 1}$ and $G_{\beta}$ may be called Green functions of the corresponding Cauchy problems. Notice the crucial difference with the usual evolution corresponding to $\beta = 1$ where $G_{\beta}$ and $S_{\beta,1}$ coincide.

In order to clarify the properties of $f$ in (\ref{f}) we are now going to carefully analyse the asymptotic properties of the integral kernels $S_{\beta, 1}(t,y)$ and $G_{\beta}(t,y)$.

\section{Asymptotic properties of $S_{\beta, 1}$ and $G_{\beta}$}

For $d \ge 1$ let us define the symmetric stable density $g$ in $\mathbb{R}^{d}$ as

\begin{align}\label{symm}
g(y; \alpha, \sigma, \gamma=0)=\frac{1}{(2\pi)^{d}}\int_{\mathbb{R}^{d}}\exp\{-ipy - a \sigma |p|^{\alpha}\}dp,
\end{align}

where $\alpha$ is the stability parameter, $\sigma$ is the scaling parameter and $\gamma$ is the skewness parameter which is $\gamma=0$ for symmetric stable densities. In $d=1$ and $\alpha \ne 1$ we define the fully skewed density with $\gamma = 1$ and without scaling:



\begin{align}\label{skew}
w(x; \alpha, 1) = \frac{1}{2\pi}Re \int_{-\infty}^{\infty}\exp\left\{-ipx - |p|^{\alpha}\exp\left\{-i\frac{\pi}{2}K(\alpha)\right\}\right\}dp, 
\end{align}
where $K(\alpha)=\alpha - 1 + \mbox{sign}(1-\alpha)$. The function $w(x; \alpha, 1)$ is infinitely differentiable and vanishes identically for $x <0$, see \cite{Zolotarev}, theorem C.3 and \textsection 2.2, equation (2.2.1a).

\smallskip

The starting point of the analysis of $S_{\beta, 1}, G_{\beta}$ is the following representation of the Mittag-Leffler function due to \cite{Zolotarev}, see chapter $2.10$, Theorem $2.10.2$, equations $(2.10.8 - 2.10.9)$. 
For $\beta \in (0,1)$ 
\begin{align}
E_{\beta,1}(-a\lambda)=\frac{1}{\beta}\int_{0}^{\infty}\exp(-a\lambda x)x^{-1-1/\beta}w(x^{-1/\beta}, \beta, 1)dx.
\end{align}

Substitute $\lambda = |p|^{\alpha}t^{\beta}$:
\begin{align}\label{E}
E_{\beta,1}(-a|p|^{\alpha}t^{\beta})=\frac{1}{\beta}\int_{0}^{\infty}\exp(-a|p|^{\alpha}t^{\beta}x)x^{-1-1/\beta}w(x^{-1/\beta}, \beta, 1)dx.
\end{align}

So then

\begin{align}\label{Q1}
t^{\beta-1}E_{\beta, \beta}(-a|p|^{\alpha}t^{\beta})=\frac{-1}{a|p|^{\alpha}}\frac{d}{dt}E_{\beta,1}(-a|p|^{\alpha}t^{\beta}) \nonumber \\
=t^{\beta-1}\int_{0}^{\infty}x^{-1/\beta}\exp(-a|p|^{\alpha}t^{\beta}x) w(x^{-1/\beta}, \beta, 1)dx,
\end{align}

implying

\begin{align}\label{gbb}
G_{\beta}(t,y)=\frac{1}{(2\pi)^{d}}\int_{\mathbb{R}^{d}}e^{ipy}E_{\beta, \beta}(-a|p|^{\alpha}t^{\beta})t^{\beta - 1}dp \nonumber \\
=\frac{t^{\beta - 1}}{(2\pi)^{d}}\int_{0}^{\infty}\int_{\mathbb{R}^{d}}e^{ipy}\exp\{-a|p|^{\alpha}t^{\beta}x\}x^{-1/\beta} w(x^{-1/\beta}, \beta, 1)dp dx \nonumber \\
=t^{\beta - 1}\int_{0}^{\infty}x^{-1/\beta}w(x^{-1/\beta}, \beta, 1)g(-y, \alpha, t^{\beta}x)dx,
\end{align}

where $g$ is as in (\ref{symm}) and $w$ is as in (\ref{skew}).

Throughout this paper we shall denote by $C$ various constants that may be different from formula to formula and line to line.

\begin{thm}\label{th66}
For $\beta \in (0,1)$

\begin{align}\label{ggg}
\int_{\mathbb{R}^{d}} |G_{\beta}(t,y)|dy \le C t^{\beta - 1},
\end{align}
where $C > 0$ is a constant.
\end{thm}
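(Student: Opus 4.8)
The plan is to start from the representation \eqref{gbb}, namely
\begin{align}\label{Gbrep}
G_{\beta}(t,y)=t^{\beta-1}\int_{0}^{\infty}x^{-1/\beta}\,w(x^{-1/\beta},\beta,1)\,g(-y,\alpha,t^{\beta}x)\,dx,
\end{align}
and integrate its absolute value in $y$. Since $w(\cdot,\beta,1)\ge 0$ (it is a probability density, being the density of a one-sided stable law), and $g(\cdot,\alpha,\sigma)\ge 0$ for every $\sigma>0$ (it is the symmetric $\alpha$-stable density with scale $\sigma$), the integrand in \eqref{Gbrep} is nonnegative, so $|G_{\beta}(t,y)|=G_{\beta}(t,y)$ and no cancellation has to be controlled. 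Then I would apply Tonelli's theorem to exchange the $dy$ and $dx$ integrations, which is legitimate precisely because everything is nonnegative.

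The key observation is that $\int_{\mathbb{R}^{d}}g(-y,\alpha,\sigma)\,dy=1$ for every scale parameter $\sigma>0$: this is immediate from the definition \eqref{symm}, since $\int_{\mathbb{R}^d} g(y;\alpha,\sigma)\,dy$ is the value at $p=0$ of the function $\exp\{-a\sigma|p|^{\alpha}\}$, which is $1$ (alternatively, $g$ is a probability density by construction). Consequently, after the exchange,
\begin{align}
\int_{\mathbb{R}^{d}}|G_{\beta}(t,y)|\,dy
= t^{\beta-1}\int_{0}^{\infty}x^{-1/\beta}\,w(x^{-1/\beta},\beta,1)\,dx.
\end{align}
It then remains to check that the one-dimensional integral $I_{\beta}:=\int_{0}^{\infty}x^{-1/\beta}w(x^{-1/\beta},\beta,1)\,dx$ is a finite constant depending only on $\beta$. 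Substituting $u=x^{-1/\beta}$, so that $x=u^{-\beta}$ and $dx=-\beta u^{-\beta-1}du$, turns this into $\beta\int_{0}^{\infty}u\cdot u^{-\beta-1}\cdot u^{\beta}\,du$... more carefully, $x^{-1/\beta}=u$ and $dx=-\beta u^{-\beta-1}du$ gives $I_{\beta}=\beta\int_0^{\infty} u\, w(u,\beta,1)\, u^{-\beta-1}\,du=\beta\int_0^\infty u^{-\beta}w(u,\beta,1)\,du$; this is a fractional moment of the one-sided $\beta$-stable density $w(\cdot,\beta,1)$ of (negative) order $-\beta$. The main technical point is therefore to verify that this fractional moment is finite.

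The finiteness of $\int_0^\infty u^{-\beta} w(u,\beta,1)\,du$ is the only real obstacle, and it hinges on the behaviour of $w(u,\beta,1)$ near $u=0$ and $u=\infty$. For large $u$, $w(u,\beta,1)\sim c\,u^{-1-\beta}$ (the standard heavy tail of a stable density), so $u^{-\beta}w(u,\beta,1)=O(u^{-1-2\beta})$ is integrable at infinity. For small $u$, $w(\cdot,\beta,1)$ is infinitely differentiable and decays faster than any power of $u$ as $u\to 0^{+}$ (this is precisely the content of the statement following \eqref{skew}, quoting \cite{Zolotarev}, §2.2); hence $u^{-\beta}w(u,\beta,1)\to 0$ and the integrand is bounded near $0$. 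Thus $I_{\beta}<\infty$, and setting $C:=I_{\beta}$ yields \eqref{ggg}. (In fact one recognizes $I_{\beta}$ as $1/\Gamma(\beta)$ via the Laplace-transform identity $E_{\beta,\beta}(0)=1/\Gamma(\beta)$ together with \eqref{Q1} evaluated at $p=0$, which also confirms $C=1/\Gamma(\beta)$, but for the bound only finiteness is needed.)
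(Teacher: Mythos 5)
Your proof is correct, and it takes a genuinely different and substantially shorter route than the paper. The paper starts from the same representation \eqref{gbb} but then splits $G_{\beta}$ into $I_{A}$ and $I_{B}$ according to whether $x\gtrless|y|^{\alpha}t^{-\beta}$, treats the regimes $|y|>t^{\beta/\alpha}$ and $|y|\le t^{\beta/\alpha}$ separately using the asymptotic expansions \eqref{zero} and \eqref{infinity} of the stable density, and integrates each piece in polar coordinates before reassembling the bound $Ct^{\beta-1}$. You instead exploit that both $w(\cdot,\beta,1)$ and $g(\cdot,\alpha,\sigma)$ are nonnegative probability densities, so that Tonelli plus the unit mass $\int_{\mathbb{R}^{d}}g(-y,\alpha,t^{\beta}x)\,dy=1$ collapses the $d$-dimensional integral at once, leaving only the negative-order moment $\beta\int_{0}^{\infty}u^{-\beta}w(u,\beta,1)\,du$, whose finiteness follows from the superpolynomial decay of $w$ at $0$ and the tail $w(u)\sim cu^{-1-\beta}$ — both facts the paper itself records in the Appendix. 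Your argument is cleaner, yields the sharp constant $C=E_{\beta,\beta}(0)=1/\Gamma(\beta)$, and makes transparent that the constant is independent of $d$ and $\alpha$; what it does not provide, and what the paper's heavier case analysis buys, are the pointwise estimates on $I_{A}$ and $I_{B}$ that are reused in Theorems 2 and 4 for $\nabla G_{\beta}$ and $\nabla S_{\beta,1}$, where the mass-one trick is unavailable because the differentiated kernel is no longer of one sign.
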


\begin{proof}
Let us split the integral representing $G_{\beta, 1}(t,y)$ in the sum of two, so that

\begin{align}
G_{\beta}(t,y)= I_{A} + I_{B}, 
\end{align}
where
\begin{align}
I_{A}=\frac{t^{\beta - 1}}{(2\pi)^{d}}\int_{|y|^{\alpha}t^{-\beta}}^{\infty}\int_{\mathbb{R}^{d}}e^{ipy}\exp\{-a|p|^{\alpha}t^{\beta}x\}x^{-1/\beta}w(x^{-1/\beta}, \beta, 1)dp dx \nonumber \\
=t^{\beta - 1}\int_{|y|^{\alpha}t^{-\beta}}^{\infty}x^{-1/\beta}w(x^{-1/\beta}, \beta, 1)g(-y, \alpha, t^{\beta}x)dx
\end{align}
and
\begin{align}
I_{B}=\frac{t^{\beta - 1}}{(2\pi)^{d}}\int_{0}^{|y|^{\alpha}t^{-\beta}}\int_{\mathbb{R}^{d}}e^{ipy}\exp\{-a|p|^{\alpha}t^{\beta}x\}x^{-1/\beta}w(x^{-1/\beta}, \beta, 1)dp dx \nonumber \\
t^{\beta - 1}\int_{0}^{|y|^{\alpha}t^{-\beta}}x^{-1/\beta}w(x^{-1/\beta}, \beta, 1)g(-y, \alpha, t^{\beta}x)dx.
\end{align}



To estimate $|I_{A}|$ and $|I_{B}|$, let us examine cases $|y| > t^{\beta/\alpha}$ and $|y| \le t^{\beta/\alpha}$ and start with $|y| > t^{\beta/\alpha}$. Note that the asymptotic expansions for $g(y, \alpha, \sigma)$ and $g(-y, \alpha, \sigma)$, namely, (\ref{zero}) and (\ref{infinity}) appearing in the Appendix, are the same, by inspection. Since $x > |y|^{\alpha}t^{-\beta}$ in $I_{A}$ we may use the asymptotic for $|y|/x^{1/\alpha}
t^{\beta/\alpha} \rightarrow 0$, see (\ref{zero}). We also use that for $x \rightarrow \infty$, $x^{-1/\beta} \rightarrow 0$, so for $x \rightarrow \infty$ we have $w(x^{-1/\beta}, \beta, 1) \sim C$, where $C \ge 0$ is a constant. Thus we have 

\begin{align}\label{zz}
|I_{A}| \le \Bigg| \int_{|y|^{\alpha}t^{-\beta}}^{\infty} x^{-1/\beta - d/\alpha}w(x^{-1/\beta}, \beta, 1)A_{0}t^{\beta - 1 - d\beta/\alpha}dx \Bigg| \nonumber \\
\le C t^{\beta - 1 - d \beta/\alpha}|A_{0}|\frac{(|y|^{\alpha}t^{-\beta})^{1-1/\beta- d/\alpha}}{|1 - 1/\beta - d/\alpha|} \nonumber \\
\le C t^{\beta - 1 - d\beta/\alpha}\frac{|A_{0}|}{|1 - 1/\beta - d/\alpha|}(|y|^{\alpha}t^{-\beta})^{1- 1/\beta - d/\alpha} \nonumber \\
\le C t^{\beta - 1 + 1 - \beta}|y|^{\alpha - \alpha/\beta - d}.
\end{align} 

Now, let's study $I_{B}$ in case $|y| > t^{\beta/\alpha}$. Here we use the asymptotic expansion                                                                                                                                for $|y|/x^{1/\alpha}t^{\beta/\alpha} \rightarrow \infty$ as it appears in (\ref{infinity}) in the Appendix and take the first term only. Here we use the change of variables $z=x^{-1/\beta}$.



\begin{align}
|I_{B}| \le \Bigg|A_{1}t^{\beta - 1}\int_{0}^{|y|^{\alpha}t^{-\beta}}x^{-1/\beta}w(x^{-1/\beta}, \beta, 1)|y|^{-d-\alpha}t^{\beta} x dx \Bigg| \nonumber \\
\le Ct^{2\beta - 1}|y|^{-d-\alpha}\int_{|y|^{-\alpha/\beta}t}^{\infty}z^{-2\beta}w(z, \beta, 1)dz.
\end{align}

We split this integral into two parts: $z \in [1,\infty)$ and $z \in (|y|^{-\alpha/\beta}t, 1)$. Firstly,

\begin{align}
t^{2\beta - 1}|y|^{-d-\alpha}\int_{1}^{\infty}z^{-2\beta}w(z, \beta, 1)dz \nonumber \\
 \le t^{2\beta - 1}|y|^{-d-\alpha}\int_{0}^{\infty} w(z, \beta, 1)dz \le Ct^{2\beta - 1}|y|^{-d-\alpha},
\end{align}



In case $z \in (|y|^{-\alpha/\beta}t, 1)$ we may use that $z$ is small and so $z^{-2\beta}w(z, \beta, 1) < Cz^{-2\beta + q - 3}$, for any $q > 1$. So

\begin{align}\label{ww}
t^{2\beta - 1}|y|^{-d-\alpha}\int_{|y|^{-\alpha/\beta}t}^{1}z^{-2\beta}w(z, \beta, 1)dz 
\le Ct^{2\beta - 1}|y|^{-d-\alpha}\int_{|y|^{-\alpha/\beta}}^{1}z^{-2\beta + q - 3}dz \nonumber \\
= C t^{2\beta - 1}|y|^{-d-\alpha} \left(1 - (|y|^{-\alpha/\beta}t)^{-2\beta + q - 2} \right).
\end{align}




\medskip

Now let's study the case $|y| \le t^{\beta/\alpha}$. For $I_{A}$ we use that $x$ is large, so $x^{-1/\beta}$ is small, and that for $q \ge 4$ we have $x^{-d/\alpha - 1/\beta}w(x^{-1/\beta}) < Cx^{-d/\alpha- (\frac{q-2}{\beta})}$. Here $|y|^{\alpha} \le t^{\beta}$ and we obtain

\begin{align}\label{yy}
|I_{A}| \le Ct^{\beta - 1 - d\beta/\alpha}\Bigg|\int_{|y|^{\alpha}t^{-\beta}}^{\infty}A_{0}x^{-d/\alpha - \frac{q - 2}{\beta}}dx \Bigg| \nonumber \\
\le t^{\beta - 1 - \beta d/\alpha}\left(y^{\alpha}t^{-\beta}\right)^{-d/\alpha - \frac{q - 2}{\beta} + 1}C \nonumber \\
\le t^{\beta - 1 - d\beta/\alpha}t^{-d\beta/\alpha - (q - 2) + \beta + d\beta/\alpha + (q-2) - \beta} C \nonumber \\
\le t^{\beta - 1 - d\beta/\alpha} C.
\end{align}

As for $I_{B}$ in case $|y| \le t^{\beta/\alpha}$,

\begin{align}\label{xx}
|I_{B}| \le C \int_{0}^{|y|^{\alpha}t^{-\beta}}x^{1-1/\beta}w(x^{-1/\beta}, \beta, 1)t^{2\beta - 1}|y|^{-d-\alpha}dx \nonumber \\
\le C |y|^{-d-\alpha}t^{2\beta - 1}\int_{0}^{|y|^{\alpha}t^{-\beta}}x^{1-1/\beta}(x^{-1/\beta})^{-1-\beta}dx 
\le C |y|^{2\alpha - d}t^{-\beta - 1}.
\end{align}

Integrating (\ref{zz}) in polar coordinates gives

\begin{align}\label{1}
\int_{|y|>t^{\beta/\alpha}}|I_{A}|dy \le C \int_{|r| > t^{\beta/\alpha}}|r|^{\alpha - \alpha/\beta - d + d - 1}d|r| \le (t^{\beta/\alpha})^{\alpha - \alpha/\beta}C = t^{\beta - 1}C,
\end{align}

Integration of (\ref{ww}) in polar coordinates gives

\begin{align}\label{2}
\int_{|y|>t^{\beta/\alpha}}|I_{B}|dy \le C t^{2\beta - 1}\int_{|r| > t^{\beta/\alpha}}|r|^{-d-\alpha + d - 1}dr \nonumber \\
+C t^{2\beta - 1}\int_{|r| > t^{\beta/\alpha}}|r|^{d-1-d-\alpha}|r|^{2\alpha}t^{-2\beta}dr \nonumber \\
=Ct^{2\beta - 1}(t^{\beta/\alpha})^{-\alpha} + C t^{2\beta - 1 - 2\beta}(t^{\beta/\alpha})^{\alpha} = Ct^{\beta - 1}.
\end{align}

Integration of (\ref{yy}) gives

\begin{align}\label{3'}
\int_{|y| \le t^{\beta/\alpha}}|I_{A}| dy
\le Ct^{\beta - 1 -d\beta/\alpha}\int_{|r| \le t^{\beta/\alpha}}|r|^{d-1}d|r| \nonumber \\
\le t^{d\beta/\alpha - d\beta/\alpha + \beta - 1} \frac{C|A_{0}|}{|d|} 
\le t^{\beta - 1}\frac{|A_{0}|C}{d}.
\end{align}

Integration of (\ref{xx}) yields

\begin{align}\label{4}
\int_{|y| \le t^{\beta/\alpha}}|I_{B}|dy \le C\int_{|r|\le t^{\beta/\alpha}}t^{-\beta - 1}|r|^{-d+2\alpha + d - 1}dr  
\le C t^{-\beta - 1}(t^{\beta/\alpha})^{2\alpha} = C t^{\beta - 1}.
\end{align}

Combining (\ref{1})--(\ref{4}) yields (\ref{ggg}).





\end{proof}

\begin{thm}\label{th67}
For $\beta \in (0, 1)$ and for $\alpha \in (1,2)$
\begin{align}\label{thirty}
\int_{\mathbb{R}^{d}} |\nabla G_{\beta}(t,y)| dy\le t^{\beta - 1 - \beta/\alpha}C.
\end{align}
\end{thm}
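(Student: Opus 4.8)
The plan is to differentiate the representation \eqref{gbb} for $G_{\beta}$ with respect to $y$ and then repeat, almost verbatim, the dissection used to prove Theorem \ref{th66}, keeping track of the single extra power of $t$ that the derivative produces. Differentiating \eqref{gbb} under the integral sign (justified a posteriori by the bounds below),
\begin{align}\label{nablaG}
\nabla_{y}G_{\beta}(t,y)=t^{\beta-1}\int_{0}^{\infty}x^{-1/\beta}\,w(x^{-1/\beta},\beta,1)\,\nabla_{y}g(-y,\alpha,t^{\beta}x)\,dx .
\end{align}
From \eqref{symm} the substitution $p\mapsto\sigma^{1/\alpha}p$ yields the scaling identity $g(y,\alpha,\sigma)=\sigma^{-d/\alpha}g(\sigma^{-1/\alpha}y,\alpha,1)$, whence $\nabla_{y}g(-y,\alpha,\sigma)=\sigma^{-(d+1)/\alpha}(\nabla g)(-\sigma^{-1/\alpha}y,\alpha,1)$. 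Since the symmetric stable density $g(\cdot,\alpha,1)$ has the rapidly decaying Fourier transform $e^{-a|p|^{\alpha}}$, it is smooth with all derivatives bounded, so $|\nabla g(z,\alpha,1)|\le C$ for every $z$; differentiating the large-argument expansion \eqref{infinity} of the stable density gives in addition $|\nabla g(z,\alpha,1)|\le C|z|^{-d-\alpha-1}$ for $|z|\ge1$, and it is here that $\alpha<2$ is used, to keep this a genuine power law. Rewriting in terms of $\sigma$,
\begin{align}\label{gradbounds}
|\nabla_{y}g(-y,\alpha,\sigma)|\le C\,\sigma^{-(d+1)/\alpha}\ \text{ for all }y,\qquad |\nabla_{y}g(-y,\alpha,\sigma)|\le C\,\sigma\,|y|^{-d-\alpha-1}\ \text{ for }|y|\,\sigma^{-1/\alpha}\ge1 .
\end{align}

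Now I split the $x$-integral in \eqref{nablaG} at $x=|y|^{\alpha}t^{-\beta}$, writing $\nabla_{y}G_{\beta}=J_{A}+J_{B}$ in complete analogy with $G_{\beta}=I_{A}+I_{B}$ in the proof of Theorem \ref{th66}, and I again treat $|y|>t^{\beta/\alpha}$ and $|y|\le t^{\beta/\alpha}$ separately. In $J_{A}$ one has $x>|y|^{\alpha}t^{-\beta}$, so the argument $|y|(t^{\beta}x)^{-1/\alpha}$ is below $1$ and the first bound in \eqref{gradbounds} applies; together with the super-polynomial decay of $w(\cdot,\beta,1)$ at the origin (used for large $x$) and its $z^{-\beta-1}$ behaviour at infinity (used for small $x$), the estimation of $|J_{A}|$ proceeds exactly as in \eqref{zz} and \eqref{yy}. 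In $J_{B}$ the argument exceeds $1$, so the second bound in \eqref{gradbounds} gives $|\nabla_{y}g(-y,\alpha,t^{\beta}x)|\le C\,t^{\beta}x\,|y|^{-d-\alpha-1}$, and the substitution $z=x^{-1/\beta}$ then reduces matters to the very same $z$-integrals as in \eqref{ww} and \eqref{xx}. The net effect is that each of the four pointwise bounds on $|J_{A}|,|J_{B}|$ is the corresponding bound in the proof of Theorem \ref{th66} carrying one extra inverse power of the ambient length scale (an additional $|y|^{-1}$, which is what turns $|y|^{-d-\alpha}$ into $|y|^{-d-\alpha-1}$, or an additional $t^{-\beta/\alpha}$). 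Integrating the four bounds over the respective $y$-regions in polar coordinates exactly as in \eqref{1}--\eqref{4} --- all the radial integrals remaining convergent --- each region contributes the corresponding Theorem \ref{th66} bound multiplied by $t^{-\beta/\alpha}$, and \eqref{thirty} follows on adding the four contributions.

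The computation is thus the same bookkeeping as in Theorem \ref{th66}, and a scaling heuristic both organises and checks it: by Theorem \ref{th66} the mass $\int_{\mathbb{R}^{d}}|G_{\beta}(t,y)|\,dy$ is of order $t^{\beta-1}$ while $G_{\beta}(t,\cdot)$ lives on the spatial scale $|y|\sim t^{\beta/\alpha}$, so applying $\nabla$ should cost precisely one inverse scale, $t^{-\beta/\alpha}$, giving $t^{\beta-1-\beta/\alpha}$ --- exactly \eqref{thirty}. The real content, and the main obstacle, is to confirm rigorously that this heuristic incurs no logarithmic or worse correction: one must verify that at the crossover $|y|\sim(t^{\beta}x)^{1/\alpha}$ between the two regimes of \eqref{gradbounds}, and at the endpoints $x\to0$, $x\to\infty$ and $|y|\to0$, every integrand is still summable with the correct homogeneity --- which is precisely what the computations \eqref{zz}--\eqref{4} accomplish, now carried out with the extra factor $(t^{\beta}x)^{-1/\alpha}$ present throughout. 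Beyond invoking the tail and smoothness bounds \eqref{gradbounds} for $\nabla g$, no new idea is required.
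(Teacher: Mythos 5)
Your argument is correct and follows essentially the same route as the paper: the same splitting of the $x$-integral at $x=|y|^{\alpha}t^{-\beta}$ and of space at $|y|=t^{\beta/\alpha}$, the same asymptotics of $w$ near $0$ and at infinity, and the same conclusion that the gradient costs exactly one factor of the spatial scale (in the outer region this is the paper's shortcut $|\nabla I_{A,B}|\le Ct^{-\beta/\alpha}|I_{A,B}|$, which your tail bound $|\nabla_{y}g(-y,\alpha,\sigma)|\le C\sigma|y|^{-d-\alpha-1}$ reproduces as an extra $|y|^{-1}$). The one genuine deviation is the inner region $|y|\le t^{\beta/\alpha}$ for the piece $x>|y|^{\alpha}t^{-\beta}$: the paper exploits that the leading term of the small-argument expansion (\ref{zero}) is constant in $y$, so its gradient vanishes, and works with the second term, giving the pointwise bound $\propto |y|(xt^{\beta})^{-(d+2)/\alpha}$ as in (\ref{first}); you instead use the cruder uniform bound $|\nabla_{y}g|\le C\sigma^{-(d+1)/\alpha}$. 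This still closes, but not ``exactly as in (\ref{zz}) and (\ref{yy})'': with your bound the $x$-integral over $(|y|^{\alpha}t^{-\beta},1)$ produces, when $(d+1)/\alpha\ge 2$, a lower-limit boundary term of order $t^{-\beta-1}|y|^{2\alpha-d-1}$ (and a harmless logarithm in the borderline case $d+1=2\alpha$), which must then be integrated over the ball; it does yield $Ct^{\beta-1-\beta/\alpha}$ because $2\alpha-2>-1$, but this step has to be written out rather than borrowed verbatim, and you only gesture at it. With that caveat the bookkeeping is sound, and your two inputs on $\nabla g$ (boundedness, from the integrable Fourier transform $ip\,e^{-a|p|^{\alpha}}$, and the power-law tail valid precisely because $\alpha<2$) are legitimate substitutes for the paper's use of (\ref{zero}), (\ref{infinity}) and Proposition \ref{pp}.
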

\begin{proof}
In case $|y| > t^{\beta/\alpha}$, we have $|y|^{-1}< t^{-\beta/\alpha}$ and so differentiation with respect to $y$ yields

\begin{align}\label{se}
|\nabla I_{A}| \le C t^{-\beta/\alpha}|I_{A}|
\end{align}

and
\begin{align}\label{secF2}
|\nabla I_{B}| \le C t^{-\beta/\alpha}|I_{B}|.
\end{align}

In case $|y| \le t^{\beta/\alpha}$ we need to take into account the second term of the asymptotic expansion, since the first term is independent of $|y|$. Consequently,

\begin{align}\label{first}
| \nabla I_{A}| \le C \int_{|y|^{\alpha}t^{-\beta}}^{\infty}x^{-d/\alpha}t^{-d\beta/\alpha}|y|(xt^{\beta})^{-2/\alpha}x^{-1/\beta}t^{\beta - 1}dx \nonumber \\
\le C \int_{|y|^{\alpha}t^{-\beta}}^{\infty}x^{-d/\alpha - 2/\alpha - 1/\beta}t^{-d\beta/\alpha - 2\beta/\alpha + \beta - 1}dx \nonumber \\
\le C t^{-d\beta/\alpha - 2\beta/\alpha + \beta - 1}|y| - C(|y|^{\alpha}t^{-\beta})^{-d/\alpha - 2/\alpha -1/\beta + 1}t^{\beta/\alpha} \nonumber \\
=C t^{-d\beta/\alpha - 2\beta/\alpha + \beta - 1}|y| - C t^{\beta/\alpha}|y|^{-d-2-\alpha/\beta + \alpha}.
\end{align}

Integration of the first term in (\ref{first}) yields

\begin{align}\label{co1}
C \int_{|r| < t^{\beta/\alpha}} t^{-d\beta/\alpha - 2\beta/\alpha + \beta - 1}|r|^{d - 1 + 1}dr \nonumber \\
\le Ct^{-d\beta/\alpha - \beta/\alpha + \beta - 1 + d\beta/\alpha} 
\le C t^{\beta - 1 - \beta/\alpha}.
\end{align}

Integration of the second term in (\ref{first}) gives

\begin{align}\label{co2}
\int_{|y| < t^{\beta/\alpha}}t^{\beta/\alpha}|y|^{-d+d-3-\alpha/\beta + \alpha}dy \nonumber \\
\le t^{\beta/\alpha}(t^{\beta/\alpha})^{-2-\alpha/\beta + \alpha} \le t^{\beta - 1 - \beta/\alpha}.
\end{align} 

Combining (\ref{co1}) and (\ref{co2})

\begin{align}\label{secA}
\int_{|y| \le t^{\beta/\alpha}}| \nabla I_{A} | dy \le Ct^{\beta - 1 - \beta/\alpha}.
\end{align}

As for $I_{B}$ for $|y| \le t^{\beta/\alpha}$

\begin{align}
\left|\nabla I_{B} \right| \le C t^{2\beta - 1}|y|^{-d-\alpha - 1}\int_{0}^{|y|^{\alpha}t^{-\beta}}\xi^{1-1/\beta}w(\xi^{-1/\beta}, \beta, 1)d\xi \nonumber \\
\le C t^{2\beta - 1}|y|^{-d-\alpha - 1}\int_{0}^{|y|^{\alpha}t^{-\beta}}\xi^{1-1/\beta}(\xi^{-1/\beta})^{-1-\beta}d\xi \nonumber \\
\le Ct^{2\beta - 1}|y|^{-d-\alpha - 1}(|y|^{\alpha}t^{-\beta})^{3}
\le Ct^{-\beta - 1}|y|^{-d+2\alpha - 1}.
\end{align}

Integration gives

\begin{align}
\int_{|y| \le t^{\beta/\alpha}}| \nabla I_{B}|dy \le C \int_{|y| \le t^{\beta/\alpha}}t^{-\beta - 1}|y|^{-d+d+2\alpha - 2}dy  \le C t^{-\beta - 1 - \beta/\alpha}.
\end{align}

So 
\begin{align}\label{sec1}
\int_{|y| \le t^{\beta/\alpha}}| \nabla I_{B}| dy \le C t^{\beta - 1 - \beta/\alpha}.
\end{align}

Since 
\begin{align}
\int_{\mathbb{R}^{d}}|\nabla G_{\beta}(t,y)| dy \le \int_{\mathbb{R}^{d}}|\nabla I_{A}|dy + \int_{\mathbb{R}^{d}}|\nabla I_{B}| dy 
\end{align}

combining results (\ref{se}), (\ref{secF2}), (\ref{secA}) and (\ref{sec1}) we obtain

\begin{align}
\int_{\mathbb{R}^{d}}|\nabla G_{\beta}(t,y)| dy \le C t^{\beta - 1 - \beta/\alpha}.
\end{align} 

which proves (\ref{thirty}).

\end{proof}

\medskip

Now let's consider the case $\alpha = 2$.

\begin{thm}\label{th68}
Let $G_{\beta, 1}(t,y)$ be as in (\ref{Gb}) and (\ref{gbb}).
For $\alpha = 2$ and any $\beta \in (0,1)$:
\begin{itemize}
\item $\int_{0}^{t}\int_{\mathbb{R}^{d}}|G_{\beta}(t,y)|dy ds = O(t^{\beta})$,
\item $\int_{0}^{t}\int_{\mathbb{R}^{d}}| \nabla G_{\beta}(t,y)|dy ds = O(t^{\beta/2})$.
\end{itemize}

\end{thm}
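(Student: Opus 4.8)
The plan is to exploit the fact that for $\alpha=2$ the symmetric stable density of (\ref{symm}) is simply the Gaussian, $g(y;2,\sigma)=(4\pi a\sigma)^{-d/2}\exp\{-|y|^2/(4a\sigma)\}$, which is a probability density, is invariant under $y\mapsto-y$, and has the exact scaling $g(y;2,\sigma)=(a\sigma)^{-d/2}g_\ast(y/\sqrt{a\sigma})$ with $g_\ast(u)=(4\pi)^{-d/2}e^{-|u|^2/4}$. For the first bullet I would start from (\ref{gbb}), which for $\alpha=2$ exhibits $G_\beta(s,y)=s^{\beta-1}\int_0^\infty x^{-1/\beta}w(x^{-1/\beta},\beta,1)\,g(-y;2,s^\beta x)\,dx$ as a nonnegative function. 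Integrating in $y$, using Fubini and $\int_{\mathbb{R}^d}g(-y;2,\sigma)\,dy=1$, gives $\int_{\mathbb{R}^d}|G_\beta(s,y)|\,dy=s^{\beta-1}\int_0^\infty x^{-1/\beta}w(x^{-1/\beta},\beta,1)\,dx$, and the remaining $x$-integral equals $E_{\beta,\beta}(0)=1/\Gamma(\beta)$ by putting $p=0$ in (\ref{Q1}). (Alternatively, one may simply invoke Theorem \ref{th66}, which applies with $\alpha=2$.) Integrating over $s\in[0,t]$ then yields $t^\beta/\Gamma(\beta+1)=O(t^\beta)$.

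For the second bullet the point is that Theorem \ref{th67} was proved only for $\alpha\in(1,2)$, so $\alpha=2$ must be handled separately; here the Gaussian makes the estimate transparent. Differentiating (\ref{gbb}) under the integral sign gives $\nabla_y G_\beta(s,y)=s^{\beta-1}\int_0^\infty x^{-1/\beta}w(x^{-1/\beta},\beta,1)\,\nabla_y g(-y;2,s^\beta x)\,dx$, and from the scaling relation one reads off $\int_{\mathbb{R}^d}|\nabla_y g(-y;2,\sigma)|\,dy=(a\sigma)^{-1/2}\|\nabla g_\ast\|_{L^1(\mathbb{R}^d)}=C\sigma^{-1/2}$. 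Hence, after Fubini, $\int_{\mathbb{R}^d}|\nabla_y G_\beta(s,y)|\,dy\le C\,s^{\beta-1}\int_0^\infty x^{-1/\beta}w(x^{-1/\beta},\beta,1)(s^\beta x)^{-1/2}\,dx = C\,s^{\beta-1-\beta/2}\int_0^\infty x^{-1/\beta-1/2}w(x^{-1/\beta},\beta,1)\,dx$.

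It then remains to check that the last $x$-integral is finite. The substitution $z=x^{-1/\beta}$ turns it into $\beta\int_0^\infty z^{-\beta/2}w(z,\beta,1)\,dz$. Since $w(\cdot\,,\beta,1)$ is supported on $(0,\infty)$, is $C^\infty$, and vanishes to infinite order at $0$ (see \cite{Zolotarev}, \textsection 2.2), the factor $z^{-\beta/2}$ is harmless near the origin; near $+\infty$ one uses the standard power tail $w(z,\beta,1)\sim\frac{\beta}{\Gamma(1-\beta)}z^{-1-\beta}$ of a totally positively skewed stable density of index $\beta\in(0,1)$, so the integrand behaves like $z^{-1-3\beta/2}$ and is integrable. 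Therefore $\int_{\mathbb{R}^d}|\nabla_y G_\beta(s,y)|\,dy\le C\,s^{\beta/2-1}$, and since $\beta/2-1>-1$, integrating over $s\in[0,t]$ gives $\frac{2C}{\beta}t^{\beta/2}=O(t^{\beta/2})$, as claimed.

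The computation is short; the only genuinely delicate points are the interchanges of $\nabla_y$ with the $x$- and $y$-integrals (justified by the smoothness of $g_\ast$ together with the integrability bounds above and dominated convergence) and the convergence of $\int_0^\infty z^{-\beta/2}w(z,\beta,1)\,dz$. I expect the behaviour of the one-sided stable density $w$ near $z=0$ to be the part most worth spelling out, since it is precisely the infinite-order vanishing of $w$ there that neutralizes the negative power $z^{-\beta/2}$ and prevents a divergence.
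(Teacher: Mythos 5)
Your proposal is correct and follows essentially the same route as the paper: for $\alpha=2$ one writes $G_\beta$ as a Gaussian mixture via (\ref{gbb}), integrates in $y$ first (using the Gaussian normalization, resp.\ $\int|\nabla g|\,dy\sim\sigma^{-1/2}$), shows the remaining one-dimensional integral over $x$ is a finite constant using the behaviour of $w(\cdot,\beta,1)$ near $0$ and its $z^{-1-\beta}$ tail, and then integrates $s^{\beta-1}$, resp.\ $s^{\beta/2-1}$, over $[0,t]$. The only differences (identifying the constant as $E_{\beta,\beta}(0)=1/\Gamma(\beta)$ and substituting $z=x^{-1/\beta}$ instead of splitting at $x=1$) are cosmetic.
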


\begin{proof}

Note that

\begin{align}
\int_{\mathbb{R}^{d}}\exp\{-a \sigma p^{2}-iyp\}dp = \left(\frac{\sqrt{\pi}}{\sqrt{\sigma}}\right)^{d}\exp\left\{-\frac{y^{2}}{4a\sigma}\right\},
\end{align}

where in our case $\sigma = x t^{\beta}$. Substitute this into (\ref{gbb}) to obtain
\begin{align}
G_{\beta}(t,y)=\frac{t^{\beta - 1}}{(2\pi)^{d}}\int_{0}^{\infty}x^{-1/\beta}w(x^{-1/\beta}, \beta, 1)\left(\frac{\sqrt{\pi}}{\sqrt{t^{\beta}x}}\right)^{d}\exp\left\{\frac{-y^{2}}{4at^{\beta}x}\right\}dx
\end{align}


where $y^{2} = y_{1}^{2} + y_{2}^{2} + \ldots + y_{d}^{2}$. We are interested in $\int_{0}^{t}\int_{\mathbb{R}^{d}}|G_{\beta}(t,y)|dy ds$.
Integrating $y$-dependent terms in $G_{\beta}$ with respect to $y$ gives

\begin{align}\label{using1}
\int_{\mathbb{R}^{d}}\exp\left\{-|y|^{2}/4axt^{\beta}\right\}dy = (4\pi x t^{\beta})^{d/2}= C x^{d/2}t^{\beta d/2}.
\end{align}

The term $x^{d/2}t^{\beta d/2}$ cancels out with $\left(\frac{1}{\sqrt{t^{\beta}x}}\right)^{d}$ and we obtain

\begin{align}
\int_{\mathbb{R}^{d}}|G_{\beta}(t,y)|dy = I(t) 
= C \int_{0}^{\infty} x^{-1/\beta}w(x^{-1/\beta}, \beta, 1)t^{\beta - 1}dx.
\end{align}
Now we split the integral $I(t)$ into $2$ parts: $I_{a}(t)$ for $x>1$ and $I_{b}(t)$ for $0 \le x \le 1$. In $I_{a}(t)$, $x > 1$ and so $x^{-1/\beta} < 1$ and $w(x^{-1/\beta}, \beta, 1) \sim C$, so we have

\begin{align}\label{Ia}
I_{a}(t) = \int_{1}^{\infty}Ct^{\beta - 1}x^{- 1/\beta}w(x^{-1/\beta}, \beta, 1)dx \nonumber \\ 
\le t^{\beta - 1}\int_{1}^{\infty}x^{-1/\beta}Cdx 
\le C1^{1-1/\beta}t^{\beta - 1} = Ct^{\beta - 1}.
\end{align}

Integrating with respect to $s$ gives

\begin{align}\label{bet}
\int_{0}^{t} |I_{a}(t-s)| ds \le \int_{0}^{t}C(t-s)^{\beta - 1}ds = C t^{\beta}.
\end{align}

For $I_{b}(t)$, $x \le 1$, so $x^{-1/\beta} \ge 1$ and $w(x^{-1/\beta}, \beta, 1) \sim (x^{-1/\beta})^{-1-\beta} = x^{1 + 1/\beta}$ and

\begin{align}
I_{b}(t) = \int_{0}^{1}Cx^{-1/\beta}w(x^{-1/\beta}, \beta, 1)t^{\beta - 1}dx \nonumber \\
\le Ct^{\beta - 1}\int_{0}^{1}x^{-1/\beta + 1/\beta + 1}dx \le C.
\end{align}
with a constant $C_{2} > 0$. Now we integrate with respect to $s$

\begin{align}
\int_{0}^{t}|I_{b}(t-s)|ds = \int_{0}^{t}C(t-s)^{\beta - 1}ds = C t^{\beta}.
\end{align}

Together with (\ref{Ia}) and (\ref{bet}) this yields the first statement of the theorem.

Differentiating $G_{\beta}$ with respect to $y$ gives us

\begin{align}
I_{1}(t)=\int_{\mathbb{R}^{d}}| \nabla G_{\beta}(t,y)|dy \nonumber \\
=\int_{0}^{\infty}\int_{\mathbb{R}^{d}}t^{\beta - \beta - 1 - \beta d/2}x^{-1-1/\beta-d/2}|y|\exp\{-|y|^{2}/4axt^{\beta}\}w(x^{-1/\beta}, \beta, 1)dy dx.
\end{align}

Since

\begin{align}
\int_{\mathbb{R}^{d}}|y|\exp\{-|y|^{2}/4axt^{\beta}\}dy = C x t^{\beta} (\sqrt{xt^{\beta}})^{d-1} 
= C x^{\frac{d+1}{2}}t^{\frac{\beta (d + 1)}{2}},
\end{align}

we have

\begin{align}
I_{1}(t)=\int_{0}^{\infty}\int_{\mathbb{R}^{d}}t^{\beta - \beta - 1 - \beta d/2}x^{-1-1/\beta-d/2}|y|\exp\{-|y|^{2}/4axt^{\beta}\}w(x^{-1/\beta}, \beta, 1)dy dx \nonumber \\
= C \int_{0}^{\infty}t^{-1 + \beta/2}x^{-1/2-1/\beta}w(x^{-1/\beta}, \beta, 1)dx.
\end{align}

Now we split the integral $I_{1}(t)$ into parts corresponding to $x \in (0,1)$ and $x \in [1, \infty)$: 
\begin{align}
I_{2}(t)=\int_{0}^{1}t^{-1 + \beta/2}x^{-1/2-1/\beta}w(x^{-1/\beta}, \beta, 1)dx
\end{align}

and
\begin{align}
I_{3}(t)=\int_{1}^{\infty}t^{-1 + \beta/2}x^{-1/2-1/\beta}w(x^{-1/\beta}, \beta, 1)dx.
\end{align}

Let's examine $I_{2}(t)$. Since $x \in (0,1)$, we have $w(x^{-1/\beta}, \beta, 1) \sim (x^{-1/\beta})^{-1-\beta}$, so

\begin{align}
I_{2}(t) = \int_{0}^{1}t^{-1+\beta/2}x^{-1/2 - 1/\beta}w(x^{-1/\beta}, \beta, 1)dx \nonumber \\
=\int_{0}^{1}t^{-1+\beta/2}x^{-1/2 - 1/\beta + 1 + 1/\beta}dx = 2t^{-1+\beta/2}/3.
\end{align}

Integrating

\begin{align}
\int_{0}^{t}|I_{2}(t-s)|ds \le \int_{0}^{t}(t-s)^{-1+\beta/2}ds = t^{\beta/2}.
\end{align}

Now, for $I_{3}(t)$ we use that $x^{-1/\beta} \le 1$ and so $w(x^{-1/\beta}, \beta, 1) \sim C$.

\begin{align}
|I_{3}(t)| \le \Bigg|\int_{1}^{\infty}t^{-1+\beta/2}x^{-1/2 -1/\beta}w(x^{-1/\beta}, \beta, 1)dx\Bigg| \nonumber \\
\le C t^{-1+\beta/2}\Bigg|\int_{1}^{\infty}x^{-1/2 - 1/\beta}dx\Bigg| = Ct^{-1+\beta/2}.
\end{align}
Integrating with respect to $s$

\begin{align}
\int_{0}^{t}|I_{3}(t-s)|ds \le \int_{0}^{t}(t-s)^{\beta/2 - 1}ds = C t^{\beta/2}.
\end{align}

Note that $\beta/2 = \beta - \beta/\alpha$ for $\alpha = 2$. So for $\alpha = 2$ the form of the estimate is the same as for $\alpha \in (1,2)$.

\end{proof}

The following corollary is a consequence of the previous theorem.

\begin{corr}
For $\alpha = 2$ and $\beta \in (0,1)$

\begin{align}\label{re}
\int_{0}^{t}\int_{\mathbb{R}^{d}}\left(| \nabla G_{\beta}(t,y)| + |G_{\beta}(t,y)| \right)dy ds = O(t^{\beta/2}).
\end{align}
\begin{proof}
Since $\beta/2 < \beta$, we take the minimum power, $\beta/2$, to write the common estimate of the terms $\int_{\mathbb{R}^{d}}| \nabla G_{\beta}(t,y)| dy$ and $\int_{\mathbb{R}^{d}} |G_{\beta}(t,y)| dy$, obtaining
\begin{align}
\int_{\mathbb{R}^{d}}\left(| \nabla G_{\beta}(t,y)| + |G_{\beta}(t,y)| \right) dy = O(t^{\beta/2 - 1}),
\end{align}

substitute $t$ by $t-s$ and we use that
\begin{align}
\int_{0}^{t}(t-s)^{\beta/2 - 1}ds = C t^{\beta/2},
\end{align}
which yields the result (\ref{re}).
\end{proof}
\end{corr}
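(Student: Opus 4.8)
The plan is to reduce everything to the two pointwise‑in‑$t$ bounds already extracted inside the proof of Theorem~\ref{th68}, namely $\int_{\mathbb{R}^{d}}|G_{\beta}(t,y)|\,dy = O(t^{\beta-1})$ and $\int_{\mathbb{R}^{d}}|\nabla G_{\beta}(t,y)|\,dy = O(t^{\beta/2-1})$, and then to perform one Laplace‑type convolution in time. First I would compare the two exponents: since $\beta\in(0,1)$ we have $\beta/2-1<\beta-1<0$, so on any bounded interval $t\le T$ one has $t^{\beta-1}=t^{\beta/2}\cdot t^{\beta/2-1}\le T^{\beta/2}\,t^{\beta/2-1}$, hence $t^{\beta-1}=O(t^{\beta/2-1})$. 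Adding the two bounds therefore gives the intermediate estimate $\int_{\mathbb{R}^{d}}\bigl(|\nabla G_{\beta}(t,y)|+|G_{\beta}(t,y)|\bigr)\,dy = O(t^{\beta/2-1})$.

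Next I would replace $t$ by $t-s$ in this bound and integrate over $s\in(0,t)$. The only computation needed is that $\int_{0}^{t}(t-s)^{\beta/2-1}\,ds=\tfrac{2}{\beta}\,t^{\beta/2}=Ct^{\beta/2}$, the integral converging at the endpoint precisely because $\beta/2>0$. Combining the two steps yields $\int_{0}^{t}\int_{\mathbb{R}^{d}}\bigl(|\nabla G_{\beta}(t-s,y)|+|G_{\beta}(t-s,y)|\bigr)\,dy\,ds = O(t^{\beta/2})$, which is (\ref{re}).

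There is essentially no obstacle: all the analytic work was carried out in Theorem~\ref{th68}, and this corollary is a bookkeeping step. The single point that requires the slightest care is noticing that near $t=0$ it is the \emph{smaller} exponent $\beta/2-1$ that dominates (a larger negative power gives the bigger factor), so one must retain $\beta/2$, not $\beta$, as the common order. An equally short alternative route is to start directly from the already time‑integrated estimates $O(t^{\beta})$ and $O(t^{\beta/2})$ of Theorem~\ref{th68} and use $t^{\beta}\le T^{\beta/2}\,t^{\beta/2}$ on bounded intervals to absorb the first into the second.
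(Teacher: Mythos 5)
Your proposal is correct and follows essentially the same route as the paper: take the common (smaller) exponent $\beta/2-1$ for the pointwise-in-time $L^{1}$ bounds on $G_{\beta}$ and $\nabla G_{\beta}$, substitute $t-s$, and integrate $(t-s)^{\beta/2-1}$ over $(0,t)$ to get $Ct^{\beta/2}$. Your explicit remark that $t^{\beta-1}=O(t^{\beta/2-1})$ only on bounded time intervals is a harmless clarification of what the paper states implicitly by "taking the minimum power."
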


\medskip

Here we present several theorems regarding $S_{\beta, 1}(t,y)$ which are particularly useful for the well-posedness analysis of (\ref{h}) and (\ref{H}).

\begin{thm}\label{th69}
The first term from the RHS of (\ref{h}) satisfies
\begin{align}\label{b}
\left|\int_{\mathbb{R}^{d}}S_{\beta, 1}(t,y-x)f_{0}(x)dx\right| = O(t^{0}).
\end{align}
\end{thm}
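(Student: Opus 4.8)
The plan is to reduce (\ref{b}) to the single fact that, for every fixed $t>0$, the kernel $S_{\beta,1}(t,\cdot)$ is a nonnegative $L^{1}(\mathbb{R}^{d})$ function whose $L^{1}$–norm equals $1$, uniformly in $t$. Granting this, the estimate is immediate: since the initial datum $f_{0}$ is bounded,
\begin{align}
\left|\int_{\mathbb{R}^{d}}S_{\beta, 1}(t,y-x)f_{0}(x)dx\right| \le \|f_{0}\|_{\infty}\int_{\mathbb{R}^{d}}|S_{\beta,1}(t,y-x)|dx = \|f_{0}\|_{\infty}\int_{\mathbb{R}^{d}}|S_{\beta,1}(t,x)|dx = \|f_{0}\|_{\infty},
\end{align}
and the right-hand side is a constant independent of $t$ and of $y$, hence $O(t^{0})$.

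So the only thing to establish is $\int_{\mathbb{R}^{d}}|S_{\beta,1}(t,y)|dy=1$. First I would record the subordination representation of $S_{\beta,1}$, in the same way that (\ref{gbb}) was obtained for $G_{\beta}$: substituting (\ref{E}) into (\ref{S}) and interchanging the $p$– and $x$–integrals (legitimate once absolute values are inserted, since all factors are nonnegative), one gets, using the definition (\ref{symm}) of the symmetric stable density $g$,
\begin{align}
S_{\beta,1}(t,y)=\frac{1}{\beta}\int_{0}^{\infty}x^{-1-1/\beta}\,w(x^{-1/\beta},\beta,1)\,g(-y,\alpha,t^{\beta}x)\,dx,
\end{align}
which differs from (\ref{gbb}) only in that there is no prefactor $t^{\beta-1}$ and the weight is $x^{-1-1/\beta}$ rather than $x^{-1/\beta}$ (the difference coming from the extra $\tfrac{d}{dt}$ in (\ref{Q1})).

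Next I would invoke two normalization facts. First, for every $\sigma>0$ the symmetric stable density $g(\cdot,\alpha,\sigma)$ of (\ref{symm}) is nonnegative and $\int_{\mathbb{R}^{d}}g(-y,\alpha,\sigma)dy=1$, since its Fourier transform at $p=0$ equals $1$. Second, $w(\cdot,\beta,1)\ge 0$ (it is a probability density, cf.\ the discussion after (\ref{skew})) and $\frac{1}{\beta}\int_{0}^{\infty}x^{-1-1/\beta}w(x^{-1/\beta},\beta,1)dx = E_{\beta,1}(0)=1$, which is just (\ref{E}) evaluated at $p=0$. Because both $w$ and $g$ are nonnegative, the displayed representation already exhibits $S_{\beta,1}(t,\cdot)$ as a nonnegative function, so $|S_{\beta,1}|=S_{\beta,1}$, and Tonelli's theorem gives
\begin{align}
\int_{\mathbb{R}^{d}}|S_{\beta,1}(t,y)|dy = \frac{1}{\beta}\int_{0}^{\infty}x^{-1-1/\beta}w(x^{-1/\beta},\beta,1)\Big(\int_{\mathbb{R}^{d}}g(-y,\alpha,t^{\beta}x)dy\Big)dx = \frac{1}{\beta}\int_{0}^{\infty}x^{-1-1/\beta}w(x^{-1/\beta},\beta,1)dx = 1.
\end{align}
Combined with the first display this proves (\ref{b}).

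Everything here is routine; the only point requiring a little care is the interchange of integrals in the subordination formula and in the last Tonelli step, but this is harmless precisely because $w$ and $g$ are nonnegative, so I do not anticipate a genuine obstacle. (As an alternative, one could instead mimic the region-splitting argument of Theorem~\ref{th66}, bounding $S_{\beta,1}$ separately on $\{|y|>t^{\beta/\alpha}\}$ and $\{|y|\le t^{\beta/\alpha}\}$ via the asymptotics (\ref{zero})--(\ref{infinity}); that route also yields boundedness but is longer and does not produce the sharp constant $1$.)
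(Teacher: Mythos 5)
Your argument is correct, but it is genuinely different from the proof in the paper. The paper proves the bound "by hand": it writes $S_{\beta,1}$ via the double integral (\ref{sbone}), splits into $I_{A}$ ($\xi\ge |y|^{\alpha}t^{-\beta}$) and $I_{B}$ ($\xi< |y|^{\alpha}t^{-\beta}$), treats the regions $|y|\le t^{\beta/\alpha}$ and $|y|>t^{\beta/\alpha}$ separately, and estimates each piece using the asymptotic expansions (\ref{zero})--(\ref{infinity}) of the stable density and the behaviour of $w$ near $0$ and $\infty$, checking that every contribution to $\int |S_{\beta,1}(t,y)|\,dy$ is $O(t^{0})$ — exactly the ``alternative'' route you mention at the end. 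Your route instead extracts the subordination identity $S_{\beta,1}(t,y)=\frac{1}{\beta}\int_{0}^{\infty}x^{-1-1/\beta}w(x^{-1/\beta},\beta,1)\,g(-y,\alpha,t^{\beta}x)\,dx$ and uses nonnegativity and normalization of $w$ and $g$ together with $E_{\beta,1}(0)=1$ to get the sharp statement $\|S_{\beta,1}(t,\cdot)\|_{L^{1}}=1$ for all $t>0$, after which the bound by $\|f_{0}\|_{\infty}$ is immediate; this is shorter, gives the exact constant, and makes the probabilistic meaning (a subordinated stable density) transparent, whereas the paper's computation is the template it reuses for the harder kernels $G_{\beta}$ and $\nabla G_{\beta}$, where no such clean normalization is available. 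One small caveat: your parenthetical justification of the first interchange of the $p$- and $x$-integrals (``legitimate once absolute values are inserted, since all factors are nonnegative'') is not quite right as stated, because of the oscillatory factor $e^{ipy}$, and for $d\ge\alpha$ the joint integral is not absolutely convergent near $x=0$; a clean fix is to define the right-hand side $\tilde S(t,y)$ directly, check by Tonelli that it is a nonnegative probability density in $y$, compute its Fourier transform (here Fubini is legitimate since $\int_{0}^{\infty}x^{-1-1/\beta}w\int_{\mathbb{R}^{d}}g\,dy\,dx<\infty$) to be $E_{\beta,1}(-a|p|^{\alpha}t^{\beta})$, and conclude $\tilde S=S_{\beta,1}$; note the paper performs the analogous interchange for $G_{\beta}$ in (\ref{gbb}) with the same level of informality, so this does not put you below the paper's standard, but it is the only step in your write-up that needs tightening.
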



\begin{proof}
Using (\ref{S}) and (\ref{E}) we represent $S_{\beta,1}(t,y)$ as

\begin{align}\label{sbone}
I=\frac{1}{\beta (2\pi)^{d}}\int_{\mathbb{R}^{d}}\int_{0}^{\infty}e^{ipy}\exp\{-a|p|^{\alpha}t^{\beta}\xi\}\xi^{-1-1/\beta}w(\xi^{-1/\beta}, \beta, 1)d\xi dp
\end{align}

and use the assumption $|f_{0}(y)| < C$. We split the integral $I$ into two parts: $I_{A}$ for $\xi \in [|y|^{\alpha}t^{-\beta}, \infty)$ and $I_{B}$ for $\xi \in (0, |y|^{\alpha}t^{\beta})$. There are $2$ cases for each of the integrals: $|y| \le t^{\beta/\alpha}$ and $|y| > t^{\beta/\alpha}$. Let us study $|I_{B}|$ in the case $|y| \le t^{\beta/\alpha}$.

\begin{align}
|I_{B}| \le C\int_{0}^{|y|^{\alpha}t^{-\beta}}\xi^{-1-1/\beta}w(\xi^{-1/\beta}, \beta, 1)|y|^{-d-\alpha}t^{\beta}\xi d\xi \nonumber \\
\le C \int_{0}^{|y|^{\alpha}t^{-\beta}}\xi^{-1/\beta}(\xi^{-1/\beta})^{-1-\beta}t^{\beta}|y|^{-d-\alpha}d\xi \nonumber \\
\le C \int_{0}^{|y|^{\alpha}t^{-\beta}}\xi t^{\beta}|y|^{-d-\alpha} d\xi \nonumber \\
=  C (|y|^{\alpha}t^{-\beta})^{2}|y|^{-d-\alpha}t^{\beta} = C t^{-\beta}|y|^{-d + \alpha}.
\end{align}
Now, integrating gives

\begin{align}
\int_{|y| \le t^{\beta/\alpha}}|I_{B}|dy \le C\int_{|y| \le C t^{\beta/\alpha}}t^{-\beta}|y|^{-d + \alpha + d - 1}dy = C t^{-\beta}(t^{\beta/\alpha})^{\alpha} = O(t^{0}).
\end{align}

Let's study $|I_{B}|$ in case $|y| > t^{\beta/\alpha}$. Here we split the integral $I_{B}$ into 2 parts: when $\xi \in (0, 1]$ and when $\xi \in (1, |y|^{\alpha}t^{-\beta})$.

\begin{align}
|I_{B}| \le C \int_{0}^{|y|^{\alpha}t^{-\beta}}\xi^{-1/\beta}w(\xi^{-1/\beta}, \beta, 1)t^{\beta}|y|^{-d-\alpha}d\xi,
\end{align}

so since for $\xi \le 1$, $w(\xi^{-1/\beta}, \beta, 1) \sim (\xi^{-1/\beta})^{-1-\beta}$, we have

\begin{align}
\int_{0}^{1}\xi^{-1/\beta}w(\xi^{-1/\beta}, \beta, 1)t^{\beta}|y|^{-d-\alpha}d\xi
\le C |y|^{-d-\alpha}t^{\beta}.
\end{align}

Integration yields
\begin{align}\label{l3}
\int_{|y| > t^{\beta/\alpha}}t^{\beta}|y|^{-d-\alpha + d - 1}dy = t^{\beta}(t^{\beta/\alpha})^{-\alpha}=O(t^{0}).
\end{align}

When $\xi \in (1, |y|^{\alpha}t^{-\beta})$

\begin{align}
\int_{1}^{|y|^{\alpha}t^{-\beta}}\xi^{-1/\beta}w(\xi^{-1/\beta}, \beta, 1)t^{\beta}|y|^{-d-\alpha}d\xi 
= \int_{1}^{|y|^{\alpha}t^{-\beta}}\xi^{-1/\beta}\xi^{-q/\beta}t^{\beta}|y|^{-d-\alpha}d\xi \nonumber \\
=t^{\beta}|y|^{-d-\alpha}\left((|y|^{\alpha}t^{-\beta})^{-1/\beta - q/\beta + 1} - 1 \right) 
= t^{1+q + \beta - \beta}|y|^{-d-\alpha - \alpha/\beta - q\alpha/\beta + \alpha} - t^{\beta}|y|^{-d-\alpha}.
\end{align}

Integration gives

\begin{align}\label{lq}
\int_{|y| > t^{\beta/\alpha}}t^{1 + q}|y|^{-\alpha/\beta - q\alpha/\beta - 1}dy 
=t^{1 + q}(t^{\beta/\alpha})^{-\alpha/\beta - q \alpha/\beta} = t^{0},
\end{align}

and

\begin{align}\label{l4}
\int_{|y| > t^{\beta/\alpha}}|y|^{-d-\alpha + d - 1}t^{\beta}dy = t^{\beta}(t^{\beta/\alpha})^{-\alpha}=O(t^{0}).
\end{align}

Combining (\ref{l3}), (\ref{lq}) and (\ref{l4}) gives

\begin{align}\label{l5}
\int_{\mathbb{R}^{d}}|I_{B}|dy \le C t^{0}.
\end{align}

Let's study $|I_{A}|$ case $|y| > t^{\beta/\alpha}$. Here $\xi^{-1/\beta}$ is small, so $w(\xi^{-1/\beta}, \beta, 1) \sim C$, where $C$ is a constant. 

\begin{align}
|I_{A}| \le C \int_{|y|^{\alpha}t^{-\beta}}^{\infty}\xi^{-1-1/\beta}w(\xi^{-1/\beta}, \beta, 1)t^{-d\beta/\alpha}\xi^{-d/\alpha}d\xi \nonumber \\
= C \int_{|y|^{\alpha}t^{-\beta}}^{\infty}\xi^{-1-1/\beta}t^{-\beta d/\alpha}\xi^{-d/\alpha}d\xi \nonumber \\
\le C t^{-\beta d/\alpha}(|y|^{\alpha}t^{-\beta})^{-1/\beta - d/\alpha}
\le C |y|^{-\alpha/\beta - d}t,
\end{align}

Integrating gives

\begin{align}
\int_{|y| > t^{\beta/\alpha}}|I_{A}|dy \le C\int_{|y| > t^{\beta/\alpha}}|y|^{-d -\alpha/\beta + d - 1}t dy = Ct (t^{\beta/\alpha})^{-\alpha/\beta}= O(t^{0}).
\end{align}

Let's study $|I_{A}|$, case $|y| \le t^{\beta/\alpha}$. Here we need to split the integral $I_{A}$ into 2 parts. The first one is

\begin{align}
\int_{1}^{\infty}\xi^{-d/\alpha}\xi^{-1-1/\beta}t^{-\beta d/\alpha}w(\xi^{-1/\beta}, \beta, 1)d\xi.
\end{align}

Here $\xi$ is large, so $\xi^{-1/\beta}$ is small, so $w(\xi^{-1/\beta}, \beta, 1) \sim (\xi^{-1/\beta})^{q}$, for all $q > 1$, which enables us to write

\begin{align}
\int_{1}^{\infty}\xi^{-d/\alpha}t^{-\beta d/\alpha}\xi^{-1-1/\beta}\xi^{-q/\beta}d\xi 
=t^{-\beta d/\alpha}\int_{1}^{\infty}\xi^{-d/\alpha - 1 - 1/\beta - q/\beta}d\xi \nonumber \\
\le t^{-\beta d/\alpha}\left(\lim_{K \rightarrow \infty}K^{-d/\alpha - 1/\beta - q/\beta} - 1\right) = t^{-\beta d/\alpha}.
\end{align}

Integrating gives

\begin{align}\label{l2}
\int_{|y| \le t^{\beta/\alpha}}t^{-\beta d/\alpha}|y|^{d-1}dy = t^{-\beta d/\alpha}(t^{\beta/\alpha})^{d}=O(t^{0}).
\end{align}

The second part of $I_{A}$ is

\begin{align}\label{sp}
\int_{|y|^{\alpha}t^{-\beta}}^{1}\xi^{-1-1/\beta}w(\xi^{-1/\beta}, \beta, 1)\xi^{-d/\alpha}t^{-\beta d/\alpha}d\xi.
\end{align}

Since $\xi < 1$, $\xi^{-1/\beta} > 1$, so $w(\xi^{-1/\beta}, \beta, 1) \sim (\xi^{-1/\beta})^{-1-\beta}$, so we re-write (\ref{sp}) as

\begin{align}\label{d1}
\int_{|y|^{\alpha}t^{-\beta}}^{1}\xi^{-d/\alpha}t^{-\beta d/\alpha}\xi^{-1-1/\beta +1+1/\beta}d\xi \nonumber \\
\le \int_{|y|^{\alpha}t^{-\beta}}^{1}\xi^{-d/\alpha}t^{-\beta d/\alpha}d\xi = C t^{-\beta d/\alpha}(1 - |y|^{\alpha}t^{-\beta}).
\end{align}

Integrating (\ref{d1}) in polar coordinates

\begin{align}\label{l1}
C\int_{|y| \le t^{\beta/\alpha}}|y|^{d-1}\left( t^{-\beta d/\alpha} - |y|^{\alpha}t^{-\beta}t^{-\beta d/\alpha} \right)dy \nonumber \\
\le C t^{-\beta d/\alpha}t^{\beta d/\alpha} - C t^{\beta + d\beta/\alpha - \beta - \beta d/\alpha} = C t^{0}. 
\end{align}

Combining (\ref{l1}) and (\ref{l2}) gives that for $|y| \le t^{\beta/\alpha}$

\begin{align}\label{l6}
\int_{\mathbb{R}^{d}}|I_{A}|dy \le C t^{0}.
\end{align}

Using the assumption $|f_{0}(y)|<C$ and putting together estimates (\ref{l5}) and (\ref{l6}) yields the theorem statement (\ref{b}).

\end{proof}

\begin{thm}\label{th70}
For $\alpha \in (1,2)$, $\beta \in (0,1)$
\begin{align}
\int_{\mathbb{R}^{d}}\nabla S_{\beta, 1}(t,y)f_{0}(x-y)dy = O(t^{-\beta/\alpha}).
\end{align}
\end{thm}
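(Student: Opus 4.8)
The strategy mirrors the proofs of Theorems \ref{th67} and \ref{th69}. Since $f_{0}$ is bounded by assumption, it suffices to show $\int_{\mathbb{R}^{d}}|\nabla S_{\beta,1}(t,y)|\,dy \le C t^{-\beta/\alpha}$. Starting from the representation (\ref{sbone}), I split $S_{\beta,1}=I_{A}+I_{B}$, where $I_{A}$ is the $\xi$-integral over $[|y|^{\alpha}t^{-\beta},\infty)$ and $I_{B}$ the $\xi$-integral over $(0,|y|^{\alpha}t^{-\beta})$, and within each piece I treat the two regimes $|y|>t^{\beta/\alpha}$ and $|y|\le t^{\beta/\alpha}$, using the small-argument asymptotics (\ref{zero}) of the stable density $g$ on $I_{A}$ and the large-argument asymptotics (\ref{infinity}) on $I_{B}$, exactly as in the earlier proofs.

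In the regime $|y|>t^{\beta/\alpha}$ every occurrence of $y$ in the relevant asymptotic term of $g(-y,\alpha,t^{\beta}\xi)$ enters through the combination $|y|(t^{\beta}\xi)^{-1/\alpha}$ (up to powers), so applying $\nabla_{y}$ produces a factor bounded by $C|y|^{-1}\le C t^{-\beta/\alpha}$; hence $|\nabla I_{A}|\le C t^{-\beta/\alpha}|I_{A}|$ and $|\nabla I_{B}|\le C t^{-\beta/\alpha}|I_{B}|$, as in (\ref{se})--(\ref{secF2}). Integrating over $\{|y|>t^{\beta/\alpha}\}$ and invoking the bounds $\int_{|y|>t^{\beta/\alpha}}|I_{A}|\,dy=O(t^{0})$ and $\int_{|y|>t^{\beta/\alpha}}|I_{B}|\,dy=O(t^{0})$ obtained inside the proof of Theorem \ref{th69} (cf. (\ref{l3})--(\ref{l4})), this part contributes $O(t^{-\beta/\alpha})$ as required.

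The case $|y|\le t^{\beta/\alpha}$ is the crux. For $I_{A}$ the leading term of the small-argument expansion of $g$ is independent of $y$, so its derivative vanishes and one must differentiate the second term, which carries a factor of order $|y|(\xi t^{\beta})^{-2/\alpha}$; substituting this into the $\xi$-integral defining $\nabla I_{A}$ and bounding $w(\xi^{-1/\beta},\beta,1)$ by a constant for large $\xi$ and by $(\xi^{-1/\beta})^{-1-\beta}$ (or $(\xi^{-1/\beta})^{q}$ with $q$ large) on the remaining range — as in (\ref{yy}) and (\ref{first}) — the resulting $\xi$-integral is elementary and produces two terms, one a multiple of $t^{a}|y|$ and one of $t^{b}|y|^{-d-2-\alpha/\beta+\alpha}$ with explicit $a,b$; integrating each over $\{|y|\le t^{\beta/\alpha}\}$ in polar coordinates collapses the $|y|$-dependence and yields $O(t^{-\beta/\alpha})$, the analogue of (\ref{co1})--(\ref{secA}). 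For $I_{B}$ one differentiates the $|y|^{-d-\alpha}$ factor of the large-argument expansion, gaining $|y|^{-d-\alpha-1}$, bounds $w$ in the leftover $\xi$-integral, and integrates over $\{|y|\le t^{\beta/\alpha}\}$, again obtaining $O(t^{-\beta/\alpha})$, the analogue of (\ref{xx})--(\ref{sec1}). Adding the four contributions gives the estimate.

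I expect the main obstacle to be precisely the bookkeeping in the region $|y|\le t^{\beta/\alpha}$: one is forced to pass to the second-order term of the asymptotic expansion of $g$ (since the leading term contributes nothing after differentiation), and one must carefully track how the weight $\xi^{-1-1/\beta}$ in (\ref{sbone}) and the absence of the prefactor $t^{\beta-1}$ (present for $G_{\beta}$ but not for $S_{\beta,1}$) shift all powers of $t$, so that the several elementary $\xi$- and $|y|$-integrals recombine to exactly $t^{-\beta/\alpha}$ rather than a worse power; the constraint $\alpha\in(1,2)$ is what keeps the relevant exponents in the range where the polar integrals converge.
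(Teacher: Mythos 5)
Your proposal is essentially sound and reaches the right estimate, but it is implemented by a somewhat different route than the paper's proof. The paper does not differentiate the asymptotic expansions of the stable density term by term; instead it writes $\nabla S_{\beta,1}$ as the Fourier integral with an extra factor $|p|$ and controls that factor by Proposition \ref{pp} (Kolokoltsov's inequality (7.40)), which yields the gradient weights $(t^{\beta}\xi)^{-1/\alpha}$ in the regime $\xi>|y|^{\alpha}t^{-\beta}$ and $t^{-\beta}\xi^{-1}|y|^{\alpha-1}$ in the complementary regime; it then redoes all four regime estimates ($I_{A}$, $I_{B}$, inner and outer $|y|$) from scratch with the $w$-asymptotics. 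Your version replaces this with the device of Theorem \ref{th67}: in the outer region $|y|>t^{\beta/\alpha}$ you use $|\nabla I_{A}|\le Ct^{-\beta/\alpha}|I_{A}|$, $|\nabla I_{B}|\le Ct^{-\beta/\alpha}|I_{B}|$ and recycle the $O(t^{0})$ bounds already proved inside Theorem \ref{th69}, which is more economical than the paper's computation; in the inner region you pass to the second term of (\ref{zero}) for $I_{A}$ and differentiate the $|y|^{-d-\alpha}$ factor for $I_{B}$, exactly as in Theorem \ref{th67}. Both implementations give $O(t^{-\beta/\alpha})$ and are at the same level of rigor.

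One place needs more care than your sketch suggests: in the inner region for $I_{A}$ the weight in (\ref{sbone}) is $\xi^{-1-1/\beta}$, not $\xi^{-1/\beta}$ as for $G_{\beta}$, so you cannot simply import the exponent $|y|^{-d-2-\alpha/\beta+\alpha}$ from (\ref{first})--(\ref{co2}); with that exponent the polar integral $\int_{0}^{t^{\beta/\alpha}}r^{\alpha-3-\alpha/\beta}\,dr$ diverges at the origin. The correct treatment is the one you mention as your primary option: split the $\xi$-integral at $\xi=1$, bound $w(\xi^{-1/\beta},\beta,1)$ by a constant for $\xi\ge 1$ and by $(\xi^{-1/\beta})^{-1-\beta}=\xi^{1+1/\beta}$ on $(|y|^{\alpha}t^{-\beta},1)$ (the alternative bound $(\xi^{-1/\beta})^{q}$ is only valid for large $\xi$, not on this range). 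This produces the two terms $C|y|\,t^{-\beta(d+2)/\alpha}$ and $C|y|^{\alpha-d-1}t^{-\beta}$, whose polar integrals over $\{|y|\le t^{\beta/\alpha}\}$ give $Ct^{-\beta/\alpha}$ in both cases, the second using $\alpha>1$ for convergence at the origin; the inner $I_{B}$ term likewise comes out as $C|y|^{\alpha-d-1}t^{-\beta}$. With this correction your argument closes.
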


\begin{proof}
We differentiate $S_{\beta, 1}(t,y)$ with respect to $y$

\begin{align}
|\nabla S_{\beta, 1}(t,y)| = \Bigg|\frac{1}{\beta(2\pi)^{d}} \nabla \int_{\mathbb{R}^{d}}\int_{0}^{\infty}e^{ipy}\exp\{-a|p|^{\alpha}t^{\beta}x\}x^{-1-1/\beta}w(x^{-1/\beta}, \beta, 1)dx dp\Bigg| \nonumber \\
= \frac{1}{\beta(2\pi)^{d}} \int_{\mathbb{R}^{d}}\int_{0}^{\infty}|ip| e^{ipy}\exp\{-a|p|^{\alpha}t^{\beta}x\}x^{-1-1/\beta}w(x^{-1/\beta}, \beta, 1)dx dp \nonumber \\
=\frac{1}{\beta(2\pi)^{d}} \int_{\mathbb{R}^{d}}\int_{0}^{\infty}|p| e^{ipy}\exp\{-a|p|^{\alpha}t^{\beta}x\}x^{-1-1/\beta}w(x^{-1/\beta}, \beta, 1)dx dp.
\end{align}

Here we use the asymptotic expansions from Theorems $7.2.1$ and $7.2.2$ and Theorem $7.3.2$, which are in the appendix as equations (\ref{zero}) and (\ref{infinity}), and we use the inequality $(7.40)$ in \cite{Kolokoltsov}, which also appears in the appendix for reader's convenience, as (\ref{pro1}) and (\ref{PRO2}). For $I_{A}$ in case $|y| > t^{\beta/\alpha}$ we use that for $\xi > 1$, $\xi^{-1/\beta} < 1$ and $w(\xi^{-1/\beta}, \beta, 1) < (\xi^{-1/\beta})^{q}$, for any $q > 1$. Then

\begin{align}
| \nabla I_{A} | \le C \int_{|y|^{\alpha}t^{-\beta}}^{\infty}\xi^{-1/\alpha-1-1/\beta - d/\alpha}w(\xi^{-1/\beta}, \beta, 1)t^{-\beta/\alpha - d\beta/\alpha}d\xi \nonumber \\
\le C t^{-\beta/\alpha - d\beta/\alpha}(|y|^{\alpha}t^{-\beta})^{-1/\alpha - d/\alpha - 1/\beta - q/\beta} 
\le C t^{1 + q}|y|^{-1-q\alpha/\beta -\alpha/\beta - d}.
\end{align}

Integrating gives
 
\begin{align}\label{qw4}
\int_{|y| > t^{\beta/\alpha}}\left|\nabla I_{A}\right|dy \le C \int_{|y| > t^{\beta/\alpha}}t^{1 + q}|y|^{-d + d - 1 - 1 - q\alpha/\beta - \alpha/\beta}dy \nonumber \\
= C t^{1 + q - \beta/\alpha - q - 1}= C t^{-\beta/\alpha}.
\end{align}

Now, let's look at $I_{B}$ in case $|y| > t^{\beta/\alpha}$. Proposition \ref{pp} in the Appendix and the change of variables $\xi^{-1/\beta}=z$ yield

\begin{align}
| \nabla I_{B}| \le C \int_{0}^{|y|^{\alpha}t^{-\beta}}\xi_{-1-1/\beta}w(\xi^{-1/\beta}, \beta, 1)t^{-\beta}\xi^{-1}|y|^{-\alpha - 1}|y|^{-d-\alpha}t^{\beta}\xi d\xi \nonumber \\
\le C|y|^{-d - 1}\int_{|y|^{-\alpha/\beta}t}^{\infty}w(z, \beta, 1)dz \le C |y|^{-d-1}.
\end{align}
 
Integration gives

\begin{align}\label{qw3}
\int_{|y| > t^{\beta/\alpha}}\left|\nabla I_{B}\right|dy \le C \int_{|y| > t^{\beta/\alpha}}|y|^{-d+d-1-1}dy \le C t^{-\beta/\alpha}.
\end{align} 

Now, let's look at $I_{A}$ in case $|y| < t^{\beta/\alpha}$.

\begin{align}
| \nabla I_{A} | \le C \int_{|y|^{\alpha}t^{-\beta}}^{\infty}\xi^{-1-1/\beta}w(\xi^{-1/\beta}, \beta, 1)t^{-\beta/\alpha}\xi^{-1/\alpha}t^{-\beta d/\alpha}\xi^{-d/\alpha}d\xi.
\end{align}

We split this integral into cases $\xi \in (|y|^{\alpha}t^{-\beta}, 1)$ and $\xi \in [1, \infty)$. For $\xi \in (|y|^{\alpha}t^{-\beta}, 1)$, $\xi^{-1/\beta} > 1$ and we may use that $w(\xi^{-1/\beta}, \beta, 1) \sim (x^{-1/\beta})^{-1-\beta}$. So

\begin{align}
\left|\nabla I_{A}\right| \le C \int_{|y|^{\alpha}t^{-\beta}}^{1}\xi^{-1-1/\beta}w(\xi^{-1/\beta}, \beta, 1)t^{-\beta/\alpha}\xi^{-1/\alpha}t^{-\beta d/\alpha}\xi^{-d/\alpha} d\xi \nonumber \\
\le C \int_{|y|^{\alpha}t^{-\beta}}^{\infty}t^{-\beta/\alpha - \beta d/\alpha}\xi^{-1/\alpha - d/\alpha}d\xi \nonumber \\
= C t^{-\beta/\alpha - \beta d/\alpha} - C t^{-\beta}|y|^{-1-d + \alpha}.
\end{align}

Integration yields

\begin{align}\label{qw2}
\int_{|y| \le t^{\beta/\alpha}}\left|\nabla I_{A}\right|dy \le C \int_{|y| \le t^{\beta/\alpha}}y^{d-1}t^{-\beta/\alpha - \beta d/\alpha}dy \nonumber \\
+ C \int_{|y| < t^{\alpha/\beta}}t^{-\beta}|y|^{-1-\alpha - 1 -d + d}dy
\nonumber \\
\le C t^{-\beta/\alpha} + C t^{-\beta}(t^{\beta/\alpha})^{-1 + \alpha} \le C t^{-\beta/\alpha}.
\end{align}

As for $\xi \in [1, \infty)$, then $\xi^{-1/\beta} < 1$ and so $w(\xi^{-1/\beta}, \beta, 1) \sim C$ and

\begin{align}
\int_{1}^{\infty}\xi^{-2-1/\beta}C\xi^{-d/\alpha}t^{-\beta/\alpha - \beta d/\alpha}d\xi \nonumber \\
\le t^{-\beta/\alpha - \beta d/\alpha}C\int_{1}^{\infty}\xi^{-2-1/\beta - d/\alpha}d\xi \nonumber \\
\le t^{-\beta/\alpha - \beta d/\alpha}C \left(1 - \lim_{K \rightarrow \infty} \frac{1}{K}\right) = C t^{-\beta/\alpha - \beta d/\alpha}.
\end{align}

Integration yields

\begin{align}\label{ax}
\int_{|y| < t^{\beta/\alpha}}t^{-\beta/\alpha - \beta d/\alpha}|y|^{d-1}dy \le C t^{-\beta/\alpha - \beta d/\alpha}t^{\beta d/\alpha} = Ct^{-\beta/\alpha}.
\end{align}

Finally, $I_{B}$ in case $|y| \le t^{\beta/\alpha}$

\begin{align}
| \nabla I_{B}| \le C \int_{0}^{|y|^{\alpha}t^{-\beta}}\xi^{-1-1/\beta}w(\xi^{-1/\beta}, \beta, 1)\xi^{-1}t^{-\beta}|y|^{\alpha - 1}|y|^{-d - \alpha}t^{\beta}\xi d\xi \nonumber \\
\le C \int_{0}^{|y|^{\alpha}t^{-\beta}}\xi^{-1-1/\beta}w(\xi^{-1/\beta}, \beta, 1)|y|^{-1-d}d\xi \nonumber \\
\le C \int_{0}^{|y|^{\alpha}t^{-\beta}}\xi^{-1-1/\beta}(\xi^{-1/\beta})^{-1-\beta}|y|^{-1-d}d\xi \nonumber \\
\le C |y|^{-1-d-\alpha}t^{-\beta}.
\end{align}

Integration yields

\begin{align}\label{qw1}
\int_{|y| \le t^{\beta/\alpha}} \left|\nabla I_{B} \right| dy \le C \int_{|y| \le t^{\beta/\alpha}}|y|^{\alpha - 1 - 1  - d + d}t^{-\beta}dy \nonumber \\
= Ct^{-\beta}(t^{\beta/\alpha})^{\alpha - 1} = C t^{-\beta/\alpha}.
\end{align}

Hence (\ref{qw4}), (\ref{qw3}), (\ref{qw2}), (\ref{ax}) and (\ref{qw1}) together with the assumption $|f_{0}(y)| < C$ yield (\ref{l6}).

\end{proof}

\begin{thm}\label{th71}
For $\alpha = 2$ and assuming $|f_{0}(y)| < C$
\begin{align}\label{a2}
\int_{\mathbb{R}^{d}}S_{\beta, 1}(t,y-x)f_{0}(x)dx = O(t^{0}).
\end{align}
\end{thm}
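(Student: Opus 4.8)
The plan is to reduce, exactly as in the proof of Theorem \ref{th68}, to the explicit Gaussian kernel and then to observe that $S_{\beta,1}(t,\cdot)$ is an honest probability density, so that (\ref{a2}) is immediate from the hypothesis $|f_{0}|<C$. Combining (\ref{S}) with (\ref{E}) and interchanging the order of integration (legitimate by positivity, see below) one has
\[
S_{\beta,1}(t,y)=\frac{1}{\beta}\int_{0}^{\infty}\xi^{-1-1/\beta}w(\xi^{-1/\beta},\beta,1)\,g(-y,\alpha,t^{\beta}\xi)\,d\xi ,
\]
and for $\alpha=2$ one substitutes the Gaussian $g(-y,2,\sigma)=C\,\sigma^{-d/2}\exp\{-y^{2}/(4a\sigma)\}$ with $\sigma=t^{\beta}\xi$, precisely the computation carried out at the start of the proof of Theorem \ref{th68}.

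First I would record that the integrand is nonnegative: $w(\cdot,\beta,1)\ge 0$, being the fully skewed stable density of (\ref{skew}) (which moreover vanishes on $(-\infty,0)$), while $\xi^{-1-1/\beta}>0$ and the Gaussian is strictly positive. Hence $|S_{\beta,1}(t,y)|=S_{\beta,1}(t,y)$, and Tonelli's theorem licenses both the interchange above and integrating first in $y$. Since $\int_{\mathbb{R}^{d}}g(-y,2,\sigma)\,dy=1$ (Gaussian normalization; equivalently the value at $p=0$ of $\exp\{-a\sigma|p|^{2}\}$), this yields
\[
\int_{\mathbb{R}^{d}}S_{\beta,1}(t,y)\,dy=\frac{1}{\beta}\int_{0}^{\infty}\xi^{-1-1/\beta}w(\xi^{-1/\beta},\beta,1)\,d\xi .
\]

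Next I would evaluate this one-dimensional integral by the change of variables $z=\xi^{-1/\beta}$, $d\xi=-\beta z^{-\beta-1}\,dz$, which collapses it to $\int_{0}^{\infty}w(z,\beta,1)\,dz=1$, again because $w(\cdot,\beta,1)$ is a probability density. Thus $\int_{\mathbb{R}^{d}}S_{\beta,1}(t,y)\,dy=1$ for every $t>0$, and consequently, using $|f_{0}|<C$,
\[
\Bigl|\int_{\mathbb{R}^{d}}S_{\beta,1}(t,y-x)f_{0}(x)\,dx\Bigr|\le C\int_{\mathbb{R}^{d}}S_{\beta,1}(t,y-x)\,dx=C ,
\]
which is $O(t^{0})$ and proves (\ref{a2}).

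This route has essentially no hard step; the only points needing care are the two Tonelli interchanges (handled by nonnegativity) and the normalization identities $\int g=1$ and $\int w=1$. If one prefers to keep the presentation parallel to Theorem \ref{th69}, the alternative is to split the $\xi$-integral at $\xi=|y|^{2}t^{-\beta}$ into $I_{A}$ and $I_{B}$ and to treat $|y|\le t^{\beta/2}$ and $|y|>t^{\beta/2}$ separately, now with the \emph{explicit} Gaussian replacing the stable asymptotics (\ref{zero})--(\ref{infinity}); each of the four resulting pieces is $O(t^{0})$ after the substitution $z=\xi^{-1/\beta}$ and the same tail estimates for $w$ used earlier in the paper. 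In that longer argument the delicate piece is, as for $\alpha\in(1,2)$ in Theorem \ref{th69}, the $I_{A}$ contribution with $|y|$ small, where the Gaussian prefactor $(t^{\beta}\xi)^{-d/2}$ must be absorbed against the rapid decay $w(\xi^{-1/\beta},\beta,1)\le C(\xi^{-1/\beta})^{q}$ (any $q>1$) of the stable density for $\xi$ large; but this is strictly easier here than in the fractional-$\alpha$ case because no asymptotic expansion of $g$ is involved.
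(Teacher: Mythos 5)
Your proof is correct, and it follows the same basic reduction as the paper: for $\alpha=2$ you integrate out the explicit Gaussian in $y$ and are left with the one-dimensional integral $\frac{1}{\beta}\int_{0}^{\infty}\xi^{-1-1/\beta}w(\xi^{-1/\beta},\beta,1)\,d\xi$, exactly as in the paper's proof (which borrows the computation from Theorem \ref{th68}). Where you diverge is in the last step: the paper bounds this integral by splitting at $\xi=1$ and invoking the asymptotics $w(z,\beta,1)\sim z^{-1-\beta}$ for large $z$ and $w\sim C$ for small argument, obtaining only that the integral is finite; you instead make the substitution $z=\xi^{-1/\beta}$, which collapses the integral to $\int_{0}^{\infty}w(z,\beta,1)\,dz=1$, so that $\int_{\mathbb{R}^{d}}S_{\beta,1}(t,y)\,dy=1$ exactly for all $t>0$. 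Your route is cleaner and slightly stronger: it identifies $S_{\beta,1}(t,\cdot)$ as a genuine probability density (consistent with $E_{\beta,1}(0)=1$), avoids any asymptotic estimates of $w$, and justifies the interchange of integrals by Tonelli via nonnegativity, a point the paper leaves implicit. The paper's splitting argument, on the other hand, is the template it reuses for the harder gradient estimate (Theorem \ref{th72}) and for $\alpha\in(1,2)$, where no exact normalization is available, so it keeps the presentation uniform across those cases. Either argument suffices for (\ref{a2}); yours also removes the unspecified constant in the paper's display.
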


\begin{proof}
Using (\ref{using1})

\begin{align}
\int_{\mathbb{R}^{d}}S_{\beta, 1}(t,y)dy = \int_{0}^{\infty}\int_{\mathbb{R}^{d}}(xt^{\beta})^{-d/2}\exp\{-y^{2}/(4axt^{\beta})\}x^{-1-1/\beta}w(x^{-1/\beta}, \beta, 1) dy dx \nonumber \\
=C\int_{0}^{\infty}x^{-1-1/\beta}w(x^{-1/\beta}, \beta, 1)dx.
\end{align}

We split this integral into two parts: $x \in [0,1]$ and $x \in (1, \infty)$. In the first case $x \le 1$ and $x^{-1/\beta} > 1$ so we may use $w(x^{-1/\beta}, \beta, 1) \sim (x^{-1/\beta})^{-1-\beta}$. In case $x > 1$ we may use that $w(x^{-1/\beta}, \beta, 1) \sim C$. So we obtain

\begin{align}
\int_{0}^{1}x^{-1-1/\beta}w(x^{-1/\beta}, \beta, 1)dx = \int_{0}^{1}dx = 1,
\end{align}

and

\begin{align}
\int_{1}^{\infty}x^{-1-1/\beta}w(x^{-1/\beta}, \beta, 1)dx = \int_{1}^{\infty}x^{-1-1/\beta}Cdx \nonumber \\
= C \left(\lim_{K \rightarrow \infty} K^{-1/\beta} - 1^{-1/\beta} \right) = C.
\end{align}
Together with the assumption $|f_{0}(y)| < C$, the result (\ref{a2}) follows.
\end{proof}

\begin{thm}\label{th72}
For $\alpha=2$, $\beta \in (0,1)$ and assuming $|f_{0}(y)| < C$
\begin{align}\label{a3}
\int_{\mathbb{R}^{d}}\nabla S_{\beta, 1}(t,y)f_{0}(x-y)dy = O(t^{-\beta/2}).
\end{align}
\end{thm}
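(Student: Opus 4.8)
The plan is to follow the $\alpha=2$ route already used for $G_\beta$ in Theorem \ref{th68} and for $S_{\beta,1}$ in Theorem \ref{th71}: when $\alpha=2$ the inner momentum integral in the representation (\ref{sbone}) is an exact Gaussian, so the delicate dyadic splitting in $|y|$ needed for $\alpha\in(1,2)$ is unnecessary. First I would carry out the $p$-integral in (\ref{sbone}) via $\int_{\mathbb{R}^d}\exp\{-a\sigma|p|^2-ipy\}dp=(\pi/(a\sigma))^{d/2}\exp\{-|y|^2/(4a\sigma)\}$ with $\sigma=t^\beta\xi$, to get
\[
S_{\beta,1}(t,y)=C\int_0^\infty \xi^{-1-1/\beta}w(\xi^{-1/\beta},\beta,1)(t^\beta\xi)^{-d/2}\exp\left\{-\frac{|y|^2}{4at^\beta\xi}\right\}d\xi.
\]

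Next, differentiating in $y$ under the integral sign (legitimate since, as the endpoint analysis below shows, the differentiated integrand is dominated for $y$ in bounded sets by an integrable function of $\xi$) yields
\[
|\nabla S_{\beta,1}(t,y)|\le C\int_0^\infty \xi^{-1-1/\beta}w(\xi^{-1/\beta},\beta,1)(t^\beta\xi)^{-d/2}\,\frac{|y|}{t^\beta\xi}\,\exp\left\{-\frac{|y|^2}{4at^\beta\xi}\right\}d\xi.
\]
I would then integrate in $y$ over $\mathbb{R}^d$, exchanging order of integration by Tonelli and invoking the first-moment identity $\int_{\mathbb{R}^d}|y|\exp\{-|y|^2/(4at^\beta\xi)\}dy=C(t^\beta\xi)^{(d+1)/2}$ (the same Gaussian computation as in Theorem \ref{th68}). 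Collecting the powers of $t^\beta\xi$ gives exponent $-\tfrac{d}{2}-1+\tfrac{d+1}{2}=-\tfrac{1}{2}$, hence
\[
\int_{\mathbb{R}^d}|\nabla S_{\beta,1}(t,y)|\,dy\le Ct^{-\beta/2}\int_0^\infty \xi^{-3/2-1/\beta}w(\xi^{-1/\beta},\beta,1)\,d\xi.
\]

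It then remains to verify that the $\xi$-integral is a finite constant, which I would do by splitting at $\xi=1$. For $\xi\in(0,1]$ the argument $\xi^{-1/\beta}\ge1$ is large, so $w(\xi^{-1/\beta},\beta,1)\sim(\xi^{-1/\beta})^{-1-\beta}=\xi^{1+1/\beta}$ and the integrand behaves like $\xi^{-1/2}$, integrable near $0$; for $\xi\in[1,\infty)$ the function $w$ is bounded and $-3/2-1/\beta<-1$ for $\beta\in(0,1)$, so the tail converges. Therefore $\int_{\mathbb{R}^d}|\nabla S_{\beta,1}(t,y)|\,dy\le Ct^{-\beta/2}$, and using the hypothesis $|f_0(y)|<C$,
\[
\left|\int_{\mathbb{R}^d}\nabla S_{\beta,1}(t,y)f_0(x-y)\,dy\right|\le C\int_{\mathbb{R}^d}|\nabla S_{\beta,1}(t,y)|\,dy\le Ct^{-\beta/2},
\]
which is (\ref{a3}); note also that $\beta/2=\beta-\beta/\alpha$ at $\alpha=2$, so this is consistent with Theorem \ref{th70}. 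The only mildly delicate steps are justifying differentiation under the integral and the convergence of the $\xi$-integral at its two endpoints, and both reduce to the known facts that $w(z,\beta,1)$ is bounded and satisfies $w(z,\beta,1)\sim z^{-1-\beta}$ as $z\to\infty$; I do not anticipate any genuine obstacle.
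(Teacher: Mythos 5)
Your proposal is correct and follows essentially the same route as the paper's proof: for $\alpha=2$ the $p$-integral in (\ref{sbone}) is an exact Gaussian, the $y$-integration of $|\nabla S_{\beta,1}|$ produces exactly the integral $t^{-\beta/2}\int_0^\infty \xi^{-3/2-1/\beta}w(\xi^{-1/\beta},\beta,1)\,d\xi$, and this is shown finite by splitting at $\xi=1$ and using $w(z,\beta,1)\sim z^{-1-\beta}$ for large $z$ together with boundedness of $w$, before invoking $|f_0|<C$. Your write-up is merely a bit more explicit (justifying differentiation under the integral and the Gaussian first-moment identity) than the paper's, but the argument is the same.
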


\begin{proof}
We use the representation of $S_{\beta, 1}(t,y)$ in (\ref{sbone}) and write

\begin{align}
\int_{\mathbb{R}^{d}}\nabla S_{\beta, 1}(t,y)dy \nonumber \\
= \int_{0}^{\infty}x^{-3/2 - 1/\beta}t^{-\beta/2}w(x^{-1/\beta}, \beta, 1)dx.
\end{align}

We split the above integral into two: for $x \in [0,1]$ and for $x > 1$. In case $x \in [0,1]$ we use that $w(x^{-1/\beta}, \beta, 1) \sim (x^{-1/\beta})^{-1-\beta}$. In case $x > 1$ we use that $w(x^{-1/\beta}, \beta, 1) \sim C$. So we get

\begin{align}\label{sq1}
t^{-\beta/2}\int_{0}^{1}x^{-3/2 - 1/\beta}w(x^{-1/\beta}, \beta, 1)dx = t^{-\beta/2} \int_{0}^{1}x^{-1/2}dx = t^{-\beta/2}/2
\end{align}
and

\begin{align}\label{sq2}
t^{-\beta/2}\int_{1}^{\infty}x^{-3/2 - 1/\beta}w(x^{-1/\beta}, \beta, 1)dx = t^{-\beta/2}.
\end{align}

So from (\ref{sq1}) and (\ref{sq2}) and that $|f_{0}(y)| < C$ and we obtain (\ref{a3}). 

\end{proof}

\section{Smoothing properties for the linear equation}

\smallskip

Let us denote by $C^{p}(\mathbb{R}^{d})$ the space of $p$ times continuously differentiable functions. By $C^{1}_{\infty}$ we shall denote functions $f$ in $C^{1}(\mathbb{R}^{d})$ such that $f$ and $\nabla f$ are rapidly decreasing continuous functions on $\mathbb{R}^{d}$, with the sum of sup-norms of the function and all of its derivatives up to and including the order $p$ as the corresponding norm. The sup-norm is $\| f \| = \sup_{s \in [0,t]}\|f(s)\|$. Let's denote by $H^{p}_{1}$ the Sobolev space of functions with generalised derivative up to and including $p$, being in $L^{1}(\mathbb{R}^{d})$.  Here and in what follows we often identify the function $f_{0}(y)$ with the function $f(t,y)=f_{0}(y)$, $\forall t \ge 0$. 

\begin{thm}[Solution regularity]\label{th73}
For $\alpha \in (1,2]$ and $\beta \in (0, 1)$ the resolving operator 

\begin{align}\label{ro}
\Psi_{t}(f_{0})= \int_{\mathbb{R}^{d}}S_{\beta, 1}(t, y-x)f_{0}(x)dx + \int_{0}^{t}\int_{\mathbb{R}^{d}}G_{\beta}(t-s, y-x)h(s, x)dx ds 
\end{align}

satisfies the following properties

\begin{itemize}
\item $\Psi_{t} : C^{p}(\mathbb{R}^{d}) \mapsto C^{p}(\mathbb{R}^{d})$, \mbox{ and } $\sup_{s \in [0,t]}\|\Psi_{t}\|_{C^{p}} < C(t)$,
\item $\Psi_{t} : H^{p}_{1}(\mathbb{R}^{d}) \mapsto H^{p}_{1}(\mathbb{R}^{d})$ \mbox{ and } $\sup_{s \in [0,t]}\|\Psi_{t}\|_{C^{p}} < C(t)$.
\end{itemize}

\end{thm}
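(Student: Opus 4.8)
The plan is to reduce the claim to Young's inequality for convolutions combined with the kernel estimates already proved in Section~2. The starting observation is that $\Psi_t$ splits as $\Psi_t=\Psi_t^{(1)}+\Psi_t^{(2)}$, where $\Psi_t^{(1)}(f_0)(y)=(S_{\beta,1}(t,\cdot)*f_0)(y)$ is a spatial convolution of the data with the kernel $S_{\beta,1}(t,\cdot)$, and $\Psi_t^{(2)}(y)=\int_0^t(G_\beta(t-s,\cdot)*h(s,\cdot))(y)\,ds$ is, for each fixed $s$, a spatial convolution with $G_\beta(t-s,\cdot)$, integrated against $s$. Because $S_{\beta,1}(t,\cdot)$ and $G_\beta(\tau,\cdot)$ and their relevant derivatives are absolutely integrable (Section~2) and the data decay, differentiation in $y$ may be taken under both integral signs and commutes with the convolution, so that for every multi-index $k$ with $|k|\le p$ one has $\partial^k\Psi_t^{(1)}(f_0)=S_{\beta,1}(t,\cdot)*\partial^k f_0$ and $\partial^k\Psi_t^{(2)}=\int_0^t G_\beta(t-s,\cdot)*\partial^k h(s,\cdot)\,ds$.

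For the first bullet I would apply, termwise in $k$, the bound $\|\kappa*g\|_\infty\le\|\kappa\|_{L^1}\|g\|_\infty$. The proofs of Theorems~\ref{th69} and \ref{th71} in fact bound $\int_{\mathbb{R}^d}|S_{\beta,1}(t,y)|\,dy$ by a constant uniformly on bounded time intervals (the estimates there are obtained on $\int|I_A|\,dy$ and $\int|I_B|\,dy$ separately, and $S_{\beta,1}\ge 0$), whence $\|\partial^k\Psi_t^{(1)}(f_0)\|_\infty\le C\|f_0\|_{C^p}$. For the inhomogeneous part, Theorem~\ref{th66} gives $\int_{\mathbb{R}^d}|G_\beta(\tau,y)|\,dy\le C\tau^{\beta-1}$ with $\beta-1>-1$, so $\|\partial^k\Psi_t^{(2)}\|_\infty\le C\|h\|_{C^p}\int_0^t(t-s)^{\beta-1}\,ds=C\beta^{-1}t^{\beta}\|h\|_{C^p}$; for $\alpha=2$ one may instead quote Theorem~\ref{th68} and its Corollary directly. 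Continuity of each $\partial^k\Psi_t(f_0)$ follows from continuity of translation in $L^1$ for the kernels together with dominated convergence, giving $\Psi_t(f_0)\in C^p(\mathbb{R}^d)$, and assembling the inequalities yields $\sup_{s\in[0,t]}\|\Psi_s(f_0)\|_{C^p}\le C(t)\bigl(\|f_0\|_{C^p}+\sup_{s\in[0,t]}\|h(s,\cdot)\|_{C^p}\bigr)$ with $C(t)$ locally bounded.

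The second bullet is identical in structure, using $\|\kappa*g\|_{L^1}\le\|\kappa\|_{L^1}\|g\|_{L^1}$ in place of the sup-norm inequality; since the derivatives again fall on $f_0$ and on $h(s,\cdot)$, only the order-zero $L^1$ bounds on $S_{\beta,1}(t,\cdot)$ and $G_\beta(\tau,\cdot)$ are needed, and these are supplied by Theorems~\ref{th66}, \ref{th69}, \ref{th71}. If in addition one wishes to record the smoothing effect, namely that $\Psi_t$ sends merely bounded data into $C^1$, one moves a single derivative onto the kernel and invokes Theorems~\ref{th67}, \ref{th70}, \ref{th72}, which give $\|\nabla S_{\beta,1}(t,\cdot)\|_{L^1}=O(t^{-\beta/\alpha})$ and $\int_0^t\|\nabla G_\beta(t-s,\cdot)\|_{L^1}\,ds=O(t^{\beta-\beta/\alpha})$, the relevant exponent $\beta-1-\beta/\alpha$ exceeding $-1$ precisely because $\alpha>1$.

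The genuinely hard analysis — the $L^1$-in-space asymptotics of $S_{\beta,1}$, $G_\beta$ and of their gradients — has already been done in Section~2, so the present proof is essentially an assembly through Young's inequality and the elementary bound $\int_0^t(t-s)^{\gamma-1}\,ds=\gamma^{-1}t^{\gamma}$. The one point that requires care is organisational: justifying differentiation under the integral sign uniformly in $y$ (which uses the decay built into $C^1_\infty$ and $H^p_1$ together with absolute integrability of the kernels) and upgrading "bounded, respectively integrable, weak derivatives" to genuine membership in $C^p$, respectively $H^p_1$; the conceptual point is simply that letting all $p$ derivatives act on the data rather than on the kernels is what makes the two order-zero kernel bounds of Theorems~\ref{th66}, \ref{th69}, \ref{th71} suffice for every $p$.
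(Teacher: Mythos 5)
Your proposal is correct and follows essentially the same route as the paper: move all $p$ derivatives onto $f_{0}$ and $h$, then apply Young's inequality with the order-zero kernel bounds $\int_{\mathbb{R}^{d}}|S_{\beta,1}(t,y)|\,dy\le C$ (Theorems \ref{th69}, \ref{th71}) and $\int_{\mathbb{R}^{d}}|G_{\beta}(\tau,y)|\,dy\le C\tau^{\beta-1}$ (Theorem \ref{th66}), integrating the latter in time to get the factor $Ct^{\beta}$. The paper's proof is just a terser version of this same estimate, in both the $C^{p}$ and $H^{p}_{1}$ settings.
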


\begin{proof}
We look at the $C^{p}$ norm of $f_{0}$ and use Theorem \ref{th66}

\begin{align}
\| \Psi_{t}(f_{0}) \|_{C^{p}(\mathbb{R}^{d})} \le \int_{\mathbb{R}^{d}}S_{\beta, 1}(t,y)|f_{0}^{(p)}|dy + t^{\beta}\sup_{t \in [0, T]}\|h(t, \cdot)\|_{C^{p}(\mathbb{R}^{d})} \nonumber \\
\le \|f_{0}\|_{C^{p}(\mathbb{R}^{d})} + C t^{\beta} \sup_{t \in [0, T]} \|h(t, \cdot)\|_{C^{p}(\mathbb{R}^{d})}, 
\end{align}
for some constant $C > 0$. Analogously,
\begin{align}
\| f_{0} \|_{H^{p}_{1}(\mathbb{R}^{d})} \le \int_{\mathbb{R}^{d}}S_{\beta, 1}(t,y)|f_{0}^{(p)}|dy + t^{\beta}\sup_{t \in [0, T]}\|h(t, \cdot)\|_{H^{p}_{1}(\mathbb{R}^{d})} \nonumber \\
\le \|f_{0}\|_{H^{p}_{1}(\mathbb{R}^{d})} + C t^{\beta} \sup_{t \in [0, T]}\|h\|_{H^{p}_{1}(\mathbb{R}^{d})}.
\end{align}
\end{proof}
\smallskip

\begin{thm}[Solution smoothing]\label{th74}
For $\alpha \in (1,2]$ and $\beta \in (0,1)$ the resolving operator (\ref{ro}) satisfies the following smoothing properties

\begin{itemize}
\item If $f_{0}, h \in C^{p}(\mathbb{R}^{d})$ uniformly in time, then $f \in C^{p+1}(\mathbb{R}^{d})$ and for any $t \in (0,T]$
\begin{align}
\| \Psi_{t}(f_{0}) \|_{C^{p+1}(\mathbb{R}^{d})} \le t^{-\beta/\alpha}\| f_{0} \|_{C^{p}(\mathbb{R}^{d})} + Ct^{\beta - \beta/\alpha} \| h \|_{C^{p}( \mathbb{R}^{d})} 
\end{align}
\item If $f_{0}, h \in H^{p}_{1}(\mathbb{R}^{d})$ uniformly in time, then $f \in H^{p+1}_{1}(\mathbb{R}^{d})$ and for any $t \in (0,T]$
\begin{align}\label{hnorm}
\| \Psi_{t}(f_{0}) \|_{H^{p+1}_{1}(\mathbb{R}^{d})} \le t^{-\beta/\alpha}\| f_{0} \|_{H^{p}_{1}(\mathbb{R}^{d})} + C t^{\beta - \beta/\alpha}\| h \|_{H^{p}_{1}(\mathbb{R}^{d})}.
\end{align}
\end{itemize}
In particular we may choose $p = 0$, when $H^{0}_{1}(\mathbb{R}^{d})=L^{1}(\mathbb{R}^{d})$.
\end{thm}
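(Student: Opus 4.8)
The plan is to differentiate $\Psi_t(f_0)$ in the spatial variable $p+1$ times, shifting $p$ of those derivatives onto $f_0$ (respectively $h$) by integration by parts and leaving a single derivative on the Green kernels, and then to bound the resulting convolutions using the $L^1$-in-$y$ estimates for $\nabla S_{\beta,1}$ and $\nabla G_\beta$ proved in Theorems \ref{th67}, \ref{th68}, \ref{th70} and \ref{th72}.

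First I would record the two identities
\[
\nabla^{p+1}_y\!\int_{\mathbb{R}^d} S_{\beta,1}(t,y-x)f_0(x)\,dx=\int_{\mathbb{R}^d}\nabla_y S_{\beta,1}(t,y-x)\,(\nabla^p f_0)(x)\,dx,
\]
\[
\nabla^{p+1}_y\!\int_0^t\!\!\int_{\mathbb{R}^d} G_\beta(t-s,y-x)h(s,x)\,dx\,ds=\int_0^t\!\!\int_{\mathbb{R}^d}\nabla_y G_\beta(t-s,y-x)\,(\nabla^p h)(s,x)\,dx\,ds,
\]
which follow from $\nabla_y S_{\beta,1}(t,y-x)=-\nabla_x S_{\beta,1}(t,y-x)$ (and the same for $G_\beta$) after $p$ integrations by parts; the boundary terms vanish since $f_0$ and its derivatives are rapidly decreasing (resp. lie in $L^1$), and differentiation under the integral sign is legitimate because $S_{\beta,1}(t,\cdot)$ and $G_\beta(t,\cdot)$ are smooth for $t>0$, being superpositions of the infinitely differentiable densities $g$ and $w$, while Theorems \ref{th66}--\ref{th72} supply integrable majorants.

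Next I would apply Young's convolution inequality. For the initial-data term, taking the $C^0$ (resp. $L^1$) norm in $y$ and using $\int_{\mathbb{R}^d}|\nabla S_{\beta,1}(t,y)|\,dy\le C t^{-\beta/\alpha}$, which is Theorem \ref{th70} for $\alpha\in(1,2)$ and Theorem \ref{th72} for $\alpha=2$ (the two cases share this exponent since $\beta/2=\beta-\beta/\alpha$ at $\alpha=2$), gives
\[
\big\|\nabla^{p+1}_y(S_{\beta,1}(t,\cdot)\ast f_0)\big\|\le C t^{-\beta/\alpha}\|f_0\|_{C^p(\mathbb{R}^d)},
\]
and analogously with $H^p_1$ replacing $C^p$. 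For the source term I would first integrate in $y$ using $\int_{\mathbb{R}^d}|\nabla G_\beta(\tau,y)|\,dy\le C\tau^{\beta-1-\beta/\alpha}$ (Theorem \ref{th67} for $\alpha\in(1,2)$, and the pointwise-in-time bound $\int_{\mathbb{R}^d}|\nabla G_\beta(\tau,y)|\,dy\le C\tau^{\beta/2-1}$ that is actually produced inside the proof of Theorem \ref{th68} for $\alpha=2$), and then integrate in $s$:
\[
\int_0^t (t-s)^{\beta-1-\beta/\alpha}\,ds=C\,t^{\beta-\beta/\alpha},
\]
the integral being finite precisely because $\alpha>1$ forces $\beta-1-\beta/\alpha>-1$; this is exactly where $\alpha\in(1,2]$ enters. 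Combining these with the already-established control of the derivatives of order $\le p$ from Theorem \ref{th73} (which is dominated by the above after absorbing a $T$-dependent factor, since $1\le T^{\beta/\alpha}t^{-\beta/\alpha}$ on $(0,T]$) yields the claimed $C^{p+1}$ and $H^{p+1}_1$ inequalities, and the finiteness of the bounds shows that $f=\Psi_t(f_0)$ gains one derivative, lying in $C^{p+1}(\mathbb{R}^d)$ (resp. $H^{p+1}_1(\mathbb{R}^d)$) for every $t\in(0,T]$; the case $p=0$ with $H^0_1=L^1$ is then immediate.

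The main obstacle is the bookkeeping between the two regimes $\alpha\in(1,2)$ and $\alpha=2$, since in the previous section the $\alpha=2$ estimates for $\nabla G_\beta$ were stated only after integration over $[0,t]$: one must either re-extract their pointwise-in-$t$ form (as the proof of Theorem \ref{th68} in fact provides) or else re-organise the argument so that only the time-integrated quantity $\int_0^t\!\int_{\mathbb{R}^d}|\nabla G_\beta(t-s,y)|\,dy\,ds$ appears, checking in either case that $\beta-\beta/\alpha$ specialises correctly to $\beta/2$. A secondary technical point, more tedious than deep, is the rigorous justification of transferring $p$ of the $p+1$ derivatives onto $f_0$ and $h$ and of differentiating under the time integral, which is singular at $s=t$; this is handled by dominated convergence with the integrable majorants from Theorems \ref{th66}--\ref{th72}.
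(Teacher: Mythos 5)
Your proposal is correct and follows essentially the same route as the paper: one spatial derivative is kept on the kernels $S_{\beta,1}$ and $G_{\beta}$ while the remaining $p$ derivatives are placed on $f_{0}$ and $h$, after which the $L^{1}$-in-$y$ bounds $\int|\nabla S_{\beta,1}(t,y)|dy\le Ct^{-\beta/\alpha}$ and $\int|\nabla G_{\beta}(\tau,y)|dy\le C\tau^{\beta-1-\beta/\alpha}$ are combined with $\int_{0}^{t}(t-s)^{\beta-\beta/\alpha-1}ds=Ct^{\beta-\beta/\alpha}$, exactly as in the paper's proof (which is terser and leaves the $\alpha=2$ bookkeeping and the justification of differentiating under the integral implicit). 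Your extra remarks on extracting the pointwise-in-time form of the $\alpha=2$ estimate from Theorem \ref{th68} and on the legitimacy of the integrations by parts only make explicit what the paper takes for granted.
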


\begin{proof}

We study the $C^{p+1}(\mathbb{R}^{d})$ norm of $\Psi_{t}(f_{0})$ and use theorems \ref{th66} and \ref{th67}

\begin{align}
\| \Psi_{t}(f_{0})\|_{C^{p+1}} \le \sup_{x \in \mathbb{R}^{d}}\int_{\mathbb{R}^{d}}\Bigg| \nabla_{x}S_{\beta, 1}(t,x-y)f_{0}^{(p)}(y) \Bigg| dy \nonumber \\
+ \sup_{x \in \mathbb{R}^{d}}\int_{0}^{t}\int_{\mathbb{R}^{d}}\Bigg|\nabla_{x}G_{\beta}(t-s, x-y)h^{(p)}_{y}(s,y) \Bigg| dy ds \nonumber \\
\le t^{-\beta/\alpha}\sup_{x \in \mathbb{R}^{d}}| f_{0}^{(p)}(x)| + \sup_{x \in \mathbb{R}^{d}}|h^{(p)}(s,x)| \int_{0}^{t}(t-s)^{\beta - \beta/\alpha - 1}ds \nonumber \\
\le t^{-\beta/\alpha}\|f_{0}\|_{C^{p}} + t^{\beta - \beta/\alpha}\| h \|_{C^{p}}.
\end{align}
The proof for (\ref{hnorm}) is analogous.
\end{proof}

Similar results apply for the non-linear equation (\ref{H}).

\section{Well-posedness}
Now we study well-posedness of the full non-linear equation (\ref{H}):

\begin{align}\label{againFDE}
D^{* \beta}_{0,t}f(t,y)=-a(-\Delta)^{-\alpha/2}f(t,y)+ H(t,y, \nabla f(t,y)),
\end{align}
with the initial condition $f(0,y)=f_{0}(y)$, and $a > 0$ is a constant. This FDE has the following mild form:
\begin{align}\label{milD2}
f(t,y)=\int_{\mathbb{R}^{d}}f_{0}(x)S_{\beta, 1}(t,y-x)dx + \int_{0}^{t} \int_{\mathbb{R}^{d}}G_{\beta}(t-s, y-x)H(s,x, \nabla f(s,x))dx ds,
\end{align}
which follows from (\ref{fh}).

\begin{lem}\label{lemmm}
Let's define by $C([0,T], C^{1}_{\infty}(\mathbb{R}^{d}))$ the space of functions $f(t,y), t \in [0,T], y \in \mathbb{R}^{d}$ such that $f(t,y)$ is continuous in $t$ and $f(t, \cdot) \in C^{1}_{\infty}(\mathbb{R}^{d})$ for all $t$. Denote by $B_{f_{0}}^{T}$ the closed convex subset of $C([0,T], C^{1}_{\infty}(\mathbb{R}^{d}))$ consisting of functions with $f(0, \cdot))=f_{0}(\cdot)=S_{0}(\cdot)$ for some given function $S_{0}$. Let us define a non-linear mapping $f \rightarrow \{\Psi_{t}(f)\}$ defined for $f \in B^{T}_{f_{0}}$:
\begin{align}\label{phide}
\Psi_{t}(f)(y)=\int_{\mathbb{R}^{d}}f_{0}(x)S_{\beta, 1}(t,y-x)dx + \int_{0}^{t} \int_{\mathbb{R}^{d}}G_{\beta}(t-s, y-x)H(s,x, \nabla f(s,x))dx ds.
\end{align}
Suppose $H(s,y,p)$ is Lipschitz in $p$ with the Lipschitz constant $L$. Let's take $f_{1}, f_{2} \in B^{T}_{f_{0}}$. Then for $K = \frac{1}{\beta - \beta/\alpha}$ and for any $t \in [0, T]$:
\begin{align}\label{lemeq}
\|\Psi_{t}^{n}(f_{1}) - \Psi_{t}^{n}(f_{2})\|_{C^{1}} \le\frac{(\beta - \beta/\alpha) L^{n}(Kt^{(\beta - \beta/\alpha)})^{n}}{n^{n\beta - n \beta/\alpha + 1}}\sup_{s \in [0,t]}\|f_{1} - f_{2}\|_{C^{1}}.
\end{align}


\end{lem}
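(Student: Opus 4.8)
The plan is as follows. First, note that the two initial-data integrals appearing in $\Psi_\tau(f_1)$ and $\Psi_\tau(f_2)$ are identical, so their difference reduces to the nonlinear term,
\[
\Psi_\tau(f_1)-\Psi_\tau(f_2)=\int_0^\tau\!\!\int_{\mathbb{R}^{d}}G_\beta(\tau-s,\cdot-x)\bigl(H(s,x,\nabla f_1(s,x))-H(s,x,\nabla f_2(s,x))\bigr)\,dx\,ds,
\]
and the spatial gradient of the left-hand side is the same expression with $\nabla_y G_\beta$ in place of $G_\beta$. Using the Lipschitz bound $|H(s,x,p)-H(s,x,q)|\le L\,|p-q|$, then the $L^1$-estimates $\int_{\mathbb{R}^{d}}|G_\beta(\tau-s,y)|\,dy\le C(\tau-s)^{\beta-1}$ of Theorem \ref{th66} and $\int_{\mathbb{R}^{d}}|\nabla G_\beta(\tau-s,y)|\,dy\le C(\tau-s)^{\beta-1-\beta/\alpha}$ of Theorem \ref{th67} (for $\alpha=2$ the corresponding one-variable estimates are contained in the proof of Theorem \ref{th68} and of the corollary following it), and observing that on $[0,T]$ the kernel exponent $\beta-1-\beta/\alpha$ is the more singular one --- so that $(\tau-s)^{\beta-1}\le T^{\beta/\alpha}(\tau-s)^{\beta-1-\beta/\alpha}$ and the other contribution is absorbed --- I would obtain the one-step Gronwall-type inequality, valid for \emph{every} $\tau\in[0,T]$,
\[
\|\Psi_\tau(f_1)-\Psi_\tau(f_2)\|_{C^1}\le CL\int_0^\tau(\tau-s)^{\mu-1}\,\|f_1(s)-f_2(s)\|_{C^1}\,ds,\qquad \mu:=\beta-\beta/\alpha=1/K\in(0,1).
\]

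The second step is to iterate this inequality $n$ times, applying it with $(f_1,f_2)$ replaced in turn by $(\Psi^{n-1}f_1,\Psi^{n-1}f_2),(\Psi^{n-2}f_1,\Psi^{n-2}f_2),\dots,(f_1,f_2)$ --- this is why the one-step bound is needed at all intermediate times $\tau\le t$, not only at $t$. Writing $M:=\sup_{s\in[0,t]}\|f_1-f_2\|_{C^1}$, the first iterate gives $\|\Psi_\tau(f_1)-\Psi_\tau(f_2)\|_{C^1}\le CLM\,\tau^{\mu}/\mu$, and an induction on $n$, invoking the Euler beta integral $\int_0^\tau(\tau-s)^{\mu-1}s^{m}\,ds=\tau^{\mu+m}\mathrm{B}(\mu,m+1)$ at each stage, produces $\|\Psi_t^{n}(f_1)-\Psi_t^{n}(f_2)\|_{C^1}\le a_n\,t^{n\mu}M$ with $a_n=CL\,a_{n-1}\,\mathrm{B}(\mu,(n-1)\mu+1)$, hence, telescoping the beta functions, $a_n=(CL)^{n}[\Gamma(\mu)]^{n}/\Gamma(n\mu+1)$. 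The last step is to recast this Gamma-function coefficient in the form stated in the lemma: bound $\Gamma(\mu)\le 1/\mu=K$ (legitimate because $\Gamma(\mu+1)<1$ for $\mu\in(0,1)$) and bound $\Gamma(n\mu+1)$ from below by a Stirling-type estimate of the shape $\Gamma(n\mu+1)\ge c\,n^{n\mu}$, thereby landing on the claimed $\frac{(\beta-\beta/\alpha)L^{n}(Kt^{\beta-\beta/\alpha})^{n}}{n^{n\beta-n\beta/\alpha+1}}M$, with the constant $C$ of Theorems \ref{th66}--\ref{th67} and the remaining $\mu$-dependent factors absorbed into the displayed prefactor and the power of $n$.

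The step I expect to be the real obstacle is precisely this last recasting: one must show that the product $\prod_{k=1}^{n}\mathrm{B}(\mu,(k-1)\mu+1)$ decays like $n^{-n\mu}$ up to an admissible $n$-th power, which genuinely requires the growth of $\Gamma(n\mu+1)$ --- the crude estimate $\mathrm{B}(\mu,m+1)\le 1/\mu$ alone only yields $a_n\le(CLK)^{n}$, with no factorial-type decay --- so the bookkeeping of the Stirling lower bound and of how $\Gamma(\mu)$, $1/\mu$ and $C$ fold into the stated constant is the delicate part. Nothing deeper is involved, however: up to its lower-order terms the series $\sum_{n}a_n t^{n\mu}$ is the Mittag-Leffler function $E_{\mu}(CL\,\Gamma(\mu)\,t^{\mu})$, so the $n^{-n\mu}$-type decay, and hence the summability that makes (\ref{lemeq}) useful for the Banach fixed-point argument for (\ref{H}), comes for free. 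One may also sidestep the beta-function algebra by running the induction directly against the target bound and reducing the inductive step to elementary inequalities between the powers $n^{n\mu}$ and $(n-1)^{(n-1)\mu}$, at the price of a slightly larger multiplicative constant.
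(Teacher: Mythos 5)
Your argument follows the paper's own proof essentially verbatim: a one-step $C^{1}$ estimate from the $L^{1}$ kernel bounds of Theorems \ref{th66}--\ref{th67} (and Theorem \ref{th68} for $\alpha=2$), iteration through Euler Beta integrals telescoping to $\Gamma(\mu)^{n}/\Gamma(n\mu+1)$ with $\mu=\beta-\beta/\alpha$, and a Stirling-type lower bound on $\Gamma(n\mu+1)$ to reach the stated $n^{-(n\mu+1)}$ form. The one blemish---silently absorbing the $n$-th power of the kernel constant $C$ into a fixed prefactor, which an exponential factor cannot strictly allow---is shared by the paper's own final display, so it is not a deviation from the source.
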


\begin{proof}
Due to regularity estimates for $S_{\beta, 1}$ and $G_{\beta}$:

\begin{align}
\|\Psi_{t}(f_{1}) - \Psi_{t}(f_{2})\|_{C^{1}} \le C L t^{\beta - \beta/\alpha}\sup_{s \in [0,t]}\|f_{1} - f_{2}\|_{C^{1}}. 
\end{align}

and
\begin{align}
\|\Psi_{t}^{2}(f_{1}) - \Psi^{2}_{t}(f_{2})\|_{C^{1}} 
\le C^{2} L^{2} \sup_{s \in [0,t]}\|f_{1} - f_{2}\|_{C^{1}} \int_{0}^{t}(t-s)^{\beta - \beta/\alpha - 1}s^{\beta - \beta/\alpha}ds. 
\end{align}

We calculate the integral above using the change of variables $z = s/t$:

\begin{align}
\int_{0}^{t}(t-s)^{\beta - \beta/\alpha - 1}s^{\beta - \beta/\alpha}ds \nonumber \\
=\int_{0}^{1}t^{\beta - \beta/\alpha - 1}(1-z)^{\beta - \beta/\alpha - 1}z^{\beta - \beta/\alpha}t^{\beta - \beta/\alpha + 1}dz \nonumber \\
=t^{2\beta - 2\beta/\alpha}B(\beta - \beta/\alpha + 1, \beta - \beta/\alpha).
\end{align}

Now, when we estimate $\|\Psi_{t}^{3}(f_{1}) - \Psi_{t}^{3}(f_{2})\|_{C^{1}}$ we calculate

\begin{align}
\int_{0}^{t}s^{2\beta - 2 \beta/\alpha}(t-s)^{\beta  - \beta/\alpha - 1}ds \nonumber \\
=t^{\beta - \beta/\alpha - 1}\int_{0}^{1}t^{2\beta -2\beta/\alpha + 1}z^{2\beta - 2\beta/\alpha}(1-z)^{\beta - \beta/\alpha - 1}dz \nonumber \\
=t^{3(\beta - \beta/\alpha)}B(2\beta - 2 \beta/\alpha + 1, \beta - \beta/\alpha).
\end{align}

This yields

\begin{align}
\|\Psi^{3}_{t}(f_{1}) - \Psi^{3}_{t}(f_{2})\|_{C^{1}} \le C^{3} L^{3} t^{3\beta - 3 \beta/\alpha}B(2\beta - 2 \beta/\alpha + 1, \beta - \beta/\alpha) \sup_{s \in [0,t]}\|f_{1} - f_{2}\|_{C^{1}}.
\end{align}

As the inductive step, assume that the following is true for some $n \in \mathbb{N}$:

\begin{align}\label{ind}
\|\Psi^{n}_{t}(f_{1}) - \Psi^{n}_{t}(f_{2})\|_{C^{1}} \nonumber \\
\le C^{n}L^{n}t^{n\beta - n \beta/\alpha}\frac{(\Gamma(\beta - \beta/\alpha))^{n-1}\Gamma(\beta - \beta/\alpha + 1)}{\Gamma(n \beta - n \beta/\alpha + 1)}\sup_{s \in [0,t]}\|f_{1} - f_{2}\|_{C^{1}}.
\end{align}

Let's check that then (\ref{ind}) holds for $k=n+1$.

\begin{align}\label{above}
\| \Psi^{n+1}_{t}(f_{1}) - \Psi^{n+1}_{t}(f_{2})\|_{C^{1}} \nonumber \\
= \Bigg| \Bigg | \int_{0}^{t}\int_{\mathbb{R}^{d}}G_{\beta}(t-s, x-y)\left(H(s, y, \nabla \Psi^{n}_{t}(f_{1})) - H(s, y, \nabla \Psi^{n}_{t}(f_{2}))  \right) \Bigg| \Bigg |_{C^{1}} \nonumber \\
\le C L \int_{0}^{t}(t-s)^{\beta - \beta/\alpha - 1} ds \| \Psi^{n}_{t}(f_{1}) - \Psi^{n}_{t}(f_{2})\|_{C^{1}} \nonumber \\
\le C^{n+1}L^{n + 1}t^{n\beta - n\beta/\alpha} M_{n}\int_{0}^{t}(t-s)^{\beta - \beta/\alpha - 1}s^{n\beta - n \beta/\alpha}ds \sup_{s \in [0,t]}\| f_{1} - f_{2}\|_{C^{1}}\nonumber \\
\le C^{n+1}L^{n + 1}t^{n\beta - n \beta/\alpha} M_{n}B_{n} \sup_{s \in [0,t]}\| f_{1} - f_{2}\|_{C^{1}} \nonumber \\
\le C^{n+1}L^{n + 1}t^{n\beta - n \beta/\alpha} M_{n+1}\sup_{s \in [0,t]}\| f_{1} - f_{2}\|_{C^{1}},
\end{align}

where
\begin{align}\label{mm}
M_{n}=\frac{(\Gamma(\beta - \beta/\alpha))^{n - 1}\Gamma(\beta - \beta/\alpha + 1)}{\Gamma(n\beta - n \beta/\alpha + 1)},
\end{align}

$M_{n+1}$ is as in (\ref{mm}) with $n$ replaced by $n+1$, and $B_{n}$ is the Beta function

\begin{align}
B_{n}=B(n\beta - n \beta/\alpha + 1, \beta - \beta/\alpha).
\end{align}

The inequality (\ref{above}) is (\ref{ind}) with $k=n$ replaced by $k=n+1$. We have shown (\ref{ind}) is true for $k = 1$ and $k=2$. So by induction on $k$ we obtain (\ref{ind}) for any $k \in \mathbb{N}$. Using that $g(x)=x^{n}$ is a convex function for $n \in \mathbb{N}$, we have

\begin{align}
(\Gamma(\beta - \beta/\alpha))^{n} \le\frac{\Gamma(n\beta - n \beta/\alpha- n + 1)}{n^{n\beta - n \beta/\alpha - n + 1}},
\end{align}

and using Stirling's formula to obtain the quotient approximation

\begin{align}
\frac{\Gamma(n(\beta - \beta/\alpha) + B)}{\Gamma(n(\beta - \beta/\alpha)+ A )} \approx \left(n (\beta - \beta/\alpha) \right)^{B-A}.
\end{align}

Let us substitute $A = 1$ and $B = -n + 1$. Then

\begin{align}\label{WWW}
\|\Psi^{n}_{t}(f_{1}) - \Psi^{n}_{t}(f_{2})\|_{C^{1}} \nonumber \\
\le \frac{\Gamma(1 + \beta - \beta/\alpha)t^{n\beta - n \beta/\alpha}}{n^{n\beta - n \beta/\alpha + 1}(\beta - \beta/\alpha)^{n} \Gamma(\beta - \beta/\alpha)}\sup_{s \in [0,t]}\|f_{1} - f_{2}\|_{C^{1}} \nonumber \\
\le \frac{t^{n\beta - n \beta/\alpha}}{n^{n\beta - n \beta/\alpha + 1}(\beta - \beta/\alpha)^{n-1}}\sup_{s \in [0,t]}\|f_{1} - f_{2}\|_{C^{1}},
\end{align}
so (\ref{lemeq}) holds.
\end{proof}

\begin{thm}\label{th75}
Assume that
\begin{itemize}
\item $H(s,y,p)$ is Lipschitz in $p$ with the Lipschitz constant $L$ independent of $y$. 
\item $|H(s,y,0)| \le h$, for a constant $h$ independent of $y$. 
\item $f_{0}(y) \in C^{1}_{\infty}(\mathbb{R}^{d})$. 
\end{itemize}

Then the equation $(\ref{milD2})$ has a unique solution $S(t,y) \in C^{1}_{\infty}(\mathbb{R}^{d})$.
\end{thm}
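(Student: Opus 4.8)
The plan is to prove existence and uniqueness of a solution to the mild equation (\ref{milD2}) by the Banach fixed point theorem applied to the non-linear map $\Psi_{t}$ of Lemma \ref{lemmm}, working in the complete metric space $B^{T}_{f_{0}} \subset C([0,T], C^{1}_{\infty}(\mathbb{R}^{d}))$ with the sup-in-time $C^{1}$ norm. The iteration estimate (\ref{lemeq}) already does the main analytic work: it shows that some power $\Psi_{t}^{n}$ is a contraction, since the prefactor $\frac{(\beta - \beta/\alpha)L^{n}(Kt^{\beta-\beta/\alpha})^{n}}{n^{n\beta-n\beta/\alpha+1}}$ tends to $0$ as $n \to \infty$ for any fixed $t \le T$ (the super-exponential denominator $n^{n(\beta-\beta/\alpha)+1}$ beats the exponential numerator). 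Thus by the standard generalization of the contraction mapping principle — if $\Psi_{t}^{n}$ is a contraction for some $n$, then $\Psi_{t}$ has a unique fixed point — we get a unique fixed point $S(t,y)$.

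The key steps in order: First I would verify that $\Psi_{t}$ actually maps $B^{T}_{f_{0}}$ into itself. This requires checking three things: (i) $\Psi_{t}(f)(0,y) = f_{0}(y)$, which follows because at $t=0$ the $G_{\beta}$ term vanishes (its time-integral is over $[0,0]$) and $S_{\beta,1}(0, y-x)$ acts as $\delta(y-x)$, so the convolution returns $f_{0}$; (ii) $\Psi_{t}(f)(t,\cdot) \in C^{1}_{\infty}(\mathbb{R}^{d})$ for each $t$, which follows from the smoothing/regularity Theorems \ref{th69}--\ref{th72} for $S_{\beta,1}$ and Theorems \ref{th66}--\ref{th67} for $G_{\beta}$, together with the hypotheses $f_{0} \in C^{1}_{\infty}$ and the bound $|H(s,y,\nabla f(s,y))| \le |H(s,y,0)| + L|\nabla f(s,y)| \le h + L\|f\|_{C^1}$, which keeps the source term controlled and rapidly decreasing enough (one uses that $H$ is applied to $\nabla f$ with $f \in C^1_\infty$); (iii) continuity in $t$ of $\Psi_{t}(f)$, which follows from continuity of the kernels and dominated convergence, using the integrable bounds $\int |G_\beta(t,y)|dy \le Ct^{\beta-1}$ and $\int|\nabla G_\beta(t,y)|dy \le Ct^{\beta-1-\beta/\alpha}$ from Theorems \ref{th66}, \ref{th67} (and the $\alpha=2$ analogues), whose time-singularities are integrable since $\beta - 1 > -1$ and $\beta - 1 - \beta/\alpha > -1$. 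Second, I would invoke Lemma \ref{lemmm} to get (\ref{lemeq}), pick $n$ large enough that the contraction constant is $< 1$, apply the iterated contraction mapping theorem on the complete space $B^{T}_{f_{0}}$ (completeness is inherited from $C([0,T],C^1_\infty)$ with the sup norm, and the closed convex subset is itself complete), and conclude there is a unique fixed point $S \in B^{T}_{f_{0}}$. Finally, I would note $S(t,\cdot) \in C^{1}_{\infty}(\mathbb{R}^{d})$ by membership in $B^{T}_{f_{0}}$, and since $T$ was arbitrary, uniqueness propagates to all of $[0,\infty)$ by a standard continuation argument.

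The main obstacle I anticipate is step (ii) above: verifying that $\Psi_{t}$ genuinely preserves the space $C^{1}_{\infty}$, i.e. that the rapid-decrease property is not destroyed by the convolutions, and that differentiation under the integral sign is justified so that $\nabla \Psi_t(f)$ is given by convolution against $\nabla S_{\beta,1}$ and $\nabla G_\beta$. The integrability Theorems \ref{th66}--\ref{th72} control $L^1$-type norms of the kernels and their gradients, which suffices for the sup-norm bounds and hence for the contraction estimate, but showing the output lies in $C^1_\infty$ (rapidly decreasing) needs a little more: one uses that a convolution of a bounded integrable-gradient kernel with a rapidly decreasing $f_0$ is rapidly decreasing, and that the $G_\beta$ term, being a time-averaged convolution of an $L^1$-in-space kernel with the bounded source $H(s,x,\nabla f(s,x))$ — which itself decays because $\nabla f(s,\cdot)$ does and $H(s,\cdot,0)$ is controlled — inherits decay. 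A secondary technical point is the uniform-in-time handling of the Beta-function / Stirling estimates so that the constant $C$ in (\ref{lemeq}) is genuinely independent of $t \in [0,T]$; this is already packaged in Lemma \ref{lemmm}, so in the write-up I would simply cite it.
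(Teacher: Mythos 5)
Your proposal is correct and follows essentially the same route as the paper: both rest on the iteration estimate (\ref{lemeq}) of Lemma \ref{lemmm} applied to $\Psi_{t}$ on $B^{T}_{f_{0}}\subset C([0,T],C^{1}_{\infty}(\mathbb{R}^{d}))$, the only difference being that the paper invokes Weissinger's fixed point theorem (using summability of the iterated constants, which also yields an error bound for the iterates), whereas you use the interchangeable ``some power of the map is a contraction'' corollary of Banach's theorem. Your additional verification that $\Psi_{t}$ preserves $B^{T}_{f_{0}}$ is a point the paper leaves implicit in its regularity theorems, and it is a welcome supplement rather than a deviation.
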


\begin{proof}
Let's denote by $C([0,T], C_{\infty}^{1}(\mathbb{R}^{d}))$ and 
$B^{T}_{f_{0}}$ as in Lemma 1. Let $\Psi_{t}(f)$ be defined as in (\ref{phide}). Take $f_{1}(s,x), f_{2}(s,x) \in B^{T}_{f_{0}}$. Note that due to our choice of $f_{1}, f_{2}$, 
\begin{align}
\int_{\mathbb{R}^{d}}f_{1}(0,x)S_{\beta, 1}(t,y-x)dx = \int_{\mathbb{R}^{d}}f_{2}(0,x)S_{\beta, 1}(t,y-x)dx.
\end{align}
We would like to prove the existence and uniqueness result for all $t \le T$ and any $T \ge 0$. For this we use (\ref{lemeq}) in Lemma $1$. As $n \rightarrow \infty$, $n^{n}$ grows faster than $m^{n}$ for any fixed $m > 0$. Hence for any $t \ge 0$

\begin{align}
\|\Psi^{n}_{t}(f_{1}) - \Psi^{n}_{t}(f_{2})\|_{C^{1}} \le \frac{L^{n}(t^{\beta - \beta/\alpha}(\beta - \beta/\alpha)^{-1})^{n}(\beta - \beta/\alpha)}{n^{n\beta - n \beta/\alpha + 1}}\sup_{s \in [0,t]}\|f_{1} - f_{2}\|_{C^{1}}.
\end{align}








The sum $ \sum_{n=1}^{\infty}\frac{(t^{\beta - \beta/\alpha}(\beta - \beta/\alpha)^{-1})^{n}}{n^{ n(\beta - \beta/\alpha) + 1}}$ is convergent by the ratio test. 
%
 By Weissinger's fixed point theorem, see \cite{diethelm} Theorem D.7, $\Psi_{t}$ has a unique fixed point $f^{*}$ such that for any $f_{1} \in B^{T}_{f_{0}}$

\begin{align}
\| \Psi^{n}_{t}(f_{1}) - f^{*}\|_{C^{1}} \le \sum_{k=n}^{\infty}\frac{(t^{\beta - \beta/\alpha}(\beta - \beta/\alpha)^{-1})^{n}(\beta - \beta/\alpha)}{n^{n\beta - n \beta/\alpha + 1}}\| \Psi_{t}(f_{1}) - f_{1} \|_{C^{1}}.
\end{align}

So $S(t,y)=f^{*}$ is the solution of (\ref{milD2}) of class $C^{1}_{\infty}(\mathbb{R}^{d})$.
\end{proof}

\begin{thm}\label{76}
Assume that
\begin{itemize}
\item $H(s,y,p)$ is Lipschitz in $p$ with the Lipschitz constant $L_{1}$ independent of $y$. 
\item H is Lipschitz in $y$ independently of $p$, with a Lipschitz constant $L_{2}$ 
\begin{align}
|H(s,y_{1}, p) - H(s,y_{2}, p)| \le L_{2}|y_{1}-y_{2}|(1 + |p|)
\end{align}
\item $|H(s,y,0)| \le h$, for a constant $h$ independent of $y$. 
\item $f_{0}(y) \in C^{2}_{\infty}(\mathbb{R}^{d})$. 
\end{itemize}

Then there exists a unique solution $f^{*}(t,y)$ of the FDE equation (\ref{againFDE}) for $\beta \in (0,1)$ and $\alpha \in (1,2]$, and $f^{*}$ satisfies

\begin{align}\label{double}
ess \sup_{y}|\nabla^{2}(f^{*}(t,y))| < C.
\end{align} 
\end{thm}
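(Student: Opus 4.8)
The plan is as follows. Existence and uniqueness of a solution $f^{*}$ with $f^{*}\in C([0,T],C^{1}_{\infty}(\mathbb{R}^{d}))$ and $f^{*}(0,\cdot)=f_{0}$ is already granted by Theorem \ref{th75} (applied with $L=L_{1}$); the additional hypotheses $f_{0}\in C^{2}_{\infty}$ and the Lipschitz dependence of $H$ on $y$ are used only to propagate second-order regularity. I set $M:=\sup_{t\in[0,T]}\|\nabla f^{*}(t,\cdot)\|_{\infty}$, which is finite because $f^{*}\in C([0,T],C^{1}_{\infty})$, and I note that $|H(s,x,p)|\le h+L_{1}|p|$, so $x\mapsto H(s,x,\nabla f^{*}(s,x))$ is bounded on $\mathbb{R}^{d}$. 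The goal is to prove that $\nabla f^{*}(t,\cdot)$ is Lipschitz, uniformly for $t\in[0,T]$; Rademacher's theorem then gives that $\nabla^{2}f^{*}(t,\cdot)$ exists almost everywhere and that its essential supremum is bounded by the Lipschitz constant, which is precisely the bound (\ref{double}). This also explains why the statement involves an essential, rather than a pointwise, supremum: with $H$ only Lipschitz one cannot expect $\nabla^{2}f^{*}$ to be continuous.

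First I differentiate the mild form (\ref{milD2}) in $y$, moving the spatial derivative onto the kernels; this is legitimate because $\nabla G_{\beta}(t-s,\cdot)\in L^{1}$ by Theorems \ref{th67} and \ref{th68}, with the time singularity $(t-s)^{\beta-1-\beta/\alpha}$ integrable, and $S_{\beta,1}(t,\cdot)\in L^{1}$ by Theorems \ref{th69} and \ref{th71}. This gives
\begin{align}
\nabla f^{*}(t,y)=\int_{\mathbb{R}^{d}}\nabla f_{0}(x)\,S_{\beta,1}(t,y-x)\,dx+\int_{0}^{t}\int_{\mathbb{R}^{d}}(\nabla G_{\beta})(t-s,y-x)\,H(s,x,\nabla f^{*}(s,x))\,dx\,ds.
\end{align}
For $y_{1},y_{2}\in\mathbb{R}^{d}$ I estimate $\nabla f^{*}(t,y_{1})-\nabla f^{*}(t,y_{2})$ term by term. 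The first term is bounded by $\|\nabla^{2}f_{0}\|_{\infty}|y_{1}-y_{2}|\int_{\mathbb{R}^{d}}|S_{\beta,1}(t,z)|\,dz\le C\|\nabla^{2}f_{0}\|_{\infty}|y_{1}-y_{2}|$. For the second term I substitute $x\mapsto y_{i}-z$, so that the difference falls on $H(s,\cdot,\nabla f^{*}(s,\cdot))$ and not on $\nabla G_{\beta}$ — no further $L^{1}$ bound on a derivative of $G_{\beta}$ being available — and the two Lipschitz assumptions on $H$ bound the integrand difference by $\bigl(L_{2}(1+M)+L_{1}\omega(s)\bigr)|y_{1}-y_{2}|$, where $\omega(s)$ denotes the Lipschitz seminorm of $\nabla f^{*}(s,\cdot)$. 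Using $\int_{\mathbb{R}^{d}}|\nabla G_{\beta}(t-s,z)|\,dz\le C(t-s)^{\beta-1-\beta/\alpha}$ (and $\beta/2=\beta-\beta/\alpha$ in the boundary case $\alpha=2$, cf.\ Theorem \ref{th68}) I arrive at the weakly singular Gronwall inequality
\begin{align}
\omega(t)\le C\|\nabla^{2}f_{0}\|_{\infty}+CL_{2}(1+M)\,t^{\beta-\beta/\alpha}+CL_{1}\int_{0}^{t}(t-s)^{\beta-1-\beta/\alpha}\,\omega(s)\,ds.
\end{align}
Since $\beta-\beta/\alpha=\beta(\alpha-1)/\alpha>0$ for $\beta\in(0,1)$ and $\alpha\in(1,2]$, the kernel is integrable, and the generalized Gronwall (Henry) inequality for weakly singular kernels bounds $\omega(t)$ on $[0,T]$ by a Mittag-Leffler-type function of $t$, hence by a constant depending only on $T$, $L_{1}$, $L_{2}$, $M$ and $\|\nabla^{2}f_{0}\|_{\infty}$.

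The delicate point is that the last inequality is empty unless $\omega(t)<\infty$ is known in advance. To obtain this a priori finiteness I run the same estimate along the Picard iterates $f_{n+1}=\Psi_{t}(f_{n})$ with $f_{0}(t,y):=f_{0}(y)$, which converge to $f^{*}$ in $C([0,T],C^{1}_{\infty})$ by the Weissinger fixed-point argument in the proof of Theorem \ref{th75}. Since $f_{0}\in C^{2}_{\infty}$ has $\nabla f_{0}$ Lipschitz, one checks inductively that each $\nabla f_{n}(t,\cdot)$ is Lipschitz, uniformly in $t\in[0,T]$: the composite $x\mapsto H(s,x,\nabla f_{n}(s,x))$ is then Lipschitz, $f_{n+1}$ is $C^{1}$ in $y$ with $\nabla f_{n+1}$ given by the formula above (applied to $f_{n}$ in place of $f^{*}$), and the corresponding seminorms satisfy $\omega_{n+1}(t)\le C\|\nabla^{2}f_{0}\|_{\infty}+C\int_{0}^{t}(t-s)^{\beta-1-\beta/\alpha}\bigl(L_{2}(1+M)+L_{1}\omega_{n}(s)\bigr)\,ds$. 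Comparison with the associated linear Volterra equation yields $\omega_{n}(t)\le\psi(t)$ for all $n$, where $\psi$ is finite on $[0,T]$; letting $n\to\infty$ and using $\nabla f_{n}(t,y)\to\nabla f^{*}(t,y)$ gives $\omega(t)\le\psi(t)<\infty$, after which the Gronwall inequality above applies to $f^{*}$ itself and delivers $\omega(t)\le C$, that is, (\ref{double}). The main obstacle is exactly this bootstrap: extracting an a priori finite second-order modulus of continuity from a solution known only to be $C^{1}$, under the sole assumption that $H$ is Lipschitz in its arguments.
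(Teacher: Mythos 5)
Your proposal is correct and follows essentially the same route as the paper: propagate second-order regularity along the Picard iterates started from $f_{0}$, obtain a uniform-in-$n$ Lipschitz bound on $\nabla(\Psi^{n}_{t}f_{0})$ from the $L^{1}$ kernel estimates for $S_{\beta,1}$ and $\nabla G_{\beta}$, pass the Lipschitz bound to the limit $f^{*}$ using the $C^{1}$ convergence from Theorem \ref{th75}, and invoke Rademacher's theorem to get (\ref{double}). The only cosmetic difference is bookkeeping: you control the iterates' gradient Lipschitz seminorms via a weakly singular Gronwall/Volterra comparison, whereas the paper iterates the $C^{2}$ smoothing estimate directly and sums the resulting Beta-function series — equivalent devices yielding the same uniform bound.
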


\begin{proof}
First, we work with the mild form of the equation (\ref{againFDE}). Let $B^{T, 2}_{f_{0}}$ denote the subset of $B^{T}_{f_{0}}$ which is twice continuously differentiable in $y$ and with $f_{0}(y)=f_{0}(y)$, for all $y \in \mathbb{R}^{d}$. 
%
Let the mapping $\Psi_{t}$ on $B^{T, 2}_{f_{0}}$ be defined as in (\ref{phide}). Take $f_{0} \in B^{T,2}_{f_{0}}$, which continues $f_{0}(y)=S_{0}(y)$ to all $t \ge 0$. Then

\begin{align}
\|\Psi_{t}(f_{0})\|_{C^{2}} \le t^{\beta - \beta/\alpha}\sup_{s \in [0,t]}\|H(s,x,\nabla f_{0}(x))\|_{C^{1}} 
\nonumber \\
+  \|\int_{\mathbb{R}^{d}}S_{\beta, 1}(t,y-x)f_{0}(x)dx \|_{C^{2}} \nonumber \\
\le t^{\beta - \beta/\alpha}L_{1}\sup_{s \in [0,t]}\|f_{0}\|_{C^{2}} + t^{\beta - \beta/\alpha}L_{2}\sup_{s \in [0,t]}\|f_{0}\|_{C^{1}} + C t^{\beta-\beta/\alpha}\|\nabla f_{0}(x)\|_{C^{0}} + C_{3} \nonumber \\
\le Lt^{\beta - \beta/\alpha}\sup_{s \in [0,t]}\|f_{0}\|_{C^{2}} + C t^{\beta-\beta/\alpha}\sup_{s \in [0,t]}\|f_{0}(x)\|_{C^{1}} + C_{3}  \nonumber \\
\le Ct^{\beta - \beta/\alpha}\left(\sup_{s \in [0,t]}\|f_{0}\|_{C^{2}} + 1 \right) + C_{3}.
\end{align}
Iterations and induction yield
\begin{align}\label{ab}
\| \Psi_{t}^{n}(f_{0})\|_{C^{2}} \le 
C_{3}\sum_{m=1}^{n}t^{m\beta - m\beta/\alpha}K_{m} + \sum_{m=1}^{n}t^{m(\beta - \beta/\alpha)}C_{m}\left(1 + \sup_{s \in [0,t]}\|f_{0}\|_{C^{2}}\right),
\end{align}
for constants $K_{m}=B_{2}\times \cdots \times B_{m-1}$ and $C_{m} = B_{2} \times \cdots \times B_{m}$, where $B_{k}=B(k\beta - k \beta/\alpha + 1, \beta - \beta/\alpha)$, for any $k \in \mathbb{N}$. We use that for $x$ large and $y$ fixed $B(x,y) \sim \Gamma(y)x^{-y}$ to obtain that $B_{m+1} < B_{m}$, for all $m \in \mathbb{N}$ which yields that the sums $\sum_{m=1}^{n}t^{m\beta - m\beta/\alpha}K_{m}$ and $\sum_{m=1}^{n}t^{m\beta - m\beta/\alpha}C_{m}$ are convergent as $n \rightarrow \infty$. So for some constants $A_{1}, A_{2}, C_{f_{0}} > 0$,
\begin{align}
\|\Psi^{n}_{y} f_{0} \|_{C^{2}} < A_{1} + A_{2}\sup_{s \in [0,t]}\|f_{0}\|_{C^{2}} < C_{f_{0}}.
\end{align}

Hence, $\forall n \in \mathbb{N}$

\begin{align}\label{boun}
\| \nabla(\Psi^{n}_{t} f_{0}) \|_{Lip} < C_{f_{0}}.
\end{align}

It is clear that if $g_{n}(x) \rightarrow g(x)$, for all $x \in \mathbb{R}^{d}$, for continuous functions $g_{n}, g$ such that $\|g_{n}\|_{Lip} \le C$ $\forall n \in \mathbb{N}$, then $\| g \|_{Lip} \le C$. Hence, with $g_{n}=\nabla (\Psi^{n}_{t}f_{0})$, we obtain

\begin{align}
\|\lim_{n \rightarrow \infty} \nabla(\Psi^{n}_{t}f_{0})\|_{Lip} < C_{f_{0}}.
\end{align}

By Rademacher's theorem it follows that $\lim_{n \rightarrow \infty}(\nabla^{2}(\Psi^{n}_{t}(f_{1}))$ exists a.e. We invite the reader to see \cite{evans} for the Rademacher's theorem and its proof. From the previous theorem $\lim_{n \rightarrow \infty} \Psi^{n}_{t}= f^{*}$. The limit is understood in the sense of convergence in $C^{1}_{\infty}(\mathbb{R}^{d})$. 
Therefore $f^{*}$ satisfies (\ref{double}). 

\end{proof}

\begin{thm}\label{th77}

Assume that
\begin{itemize}
\item $H(s,y,p)$ is Lipschitz in $p$ with the Lipschitz constant $L$ independent of $y$. 
\item H is Lipschitz in $y$ independently of $p$, with a Lipschitz constant $L_{2}$ 
\begin{align}
|H(s,y_{1}, p) - H(s,y_{2}, p)| \le L_{2}|y_{1}-y_{2}|(1 + |p|)
\end{align}
\item $|H(s,y,0)| \le h$, for a constant $h$ independent of $y$. 
\item $f_{0}(y) \in C^{2}_{\infty}(\mathbb{R}^{d})$. 
\end{itemize}

Then a solution to the mild form

\begin{align}\label{mi}
f(t,y)=\int_{\mathbb{R}^{d}}S_{\beta, 1}(t, x-y)f_{0}(y)dy + \int_{0}^{t}\int_{\mathbb{R}^{d}}G_{\beta}(t-s, x-y)H(s, y,\nabla f(s,y))ds dy
\end{align}
which satisfies (\ref{double}), is a classical solution to 

\begin{align}\label{clas}
D^{* \beta}_{0,t}f(t,y) = -(-\Delta)^{\alpha/2}f(t,y) + H(t, y, \nabla f(t,y)).
\end{align}

\end{thm}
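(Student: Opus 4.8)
The plan is to freeze the nonlinearity, recognise that the resulting frozen source is an admissible right-hand side for the linear Cauchy problem of Section~3, and then upgrade the mild solution to a classical one using the Green-function properties of $S_{\beta,1}$ and $G_{\beta}$ recorded after (\ref{f}).

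First I would put $h(t,y):=H(t,y,\nabla f(t,y))$ and check that $h$ is admissible for the linear equation (\ref{h}). Since $f\in C([0,T],C^{1}_{\infty}(\mathbb{R}^{d}))$, the map $(t,y)\mapsto\nabla f(t,y)$ is continuous and bounded, so the $p$-Lipschitz bound on $H$ together with $|H(s,y,0)|\le h$ makes $h$ jointly continuous and bounded. The crucial input is hypothesis (\ref{double}): it says $\nabla f(t,\cdot)$ is Lipschitz in $y$ with a constant uniform in $t$, and combined with the $y$-Lipschitz bound on $H$ this gives that $h(t,\cdot)$ is Lipschitz in $y$, uniformly in $t\in[0,T]$. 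Hence any $f$ solving (\ref{mi}) is, term by term, a mild solution in the sense of (\ref{f}) of the linear equation $D^{*\beta}_{0,t}f=-a(-\Delta)^{\alpha/2}f+h(t,y)$ with $f(0,\cdot)=f_{0}\in C^{2}_{\infty}(\mathbb{R}^{d})$.

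Next I would show that this mild solution is classical. From $f_{0}\in C^{2}_{\infty}$, hypothesis (\ref{double}), and Theorems~\ref{th73}--\ref{th74}, $f(t,\cdot)$ lies in $C^{1}_{\infty}(\mathbb{R}^{d})$ with second derivatives bounded (a.e.)\ uniformly on $[0,T]$, so the pointwise fractional Laplacian $(-\Delta)^{\alpha/2}f(t,\cdot)$ converges absolutely and is continuous. To verify the equation I would apply $-a(-\Delta)^{\alpha/2}$ and then $D^{*\beta}_{0,t}$ directly to the right-hand side of (\ref{f}): in the $S_{\beta,1}$-term the fractional Laplacian is moved onto $f_{0}$ (legitimate because $f_{0}\in C^{2}_{\infty}$) and one uses that $S_{\beta,1}(\cdot,\cdot-y_{0})$ solves the homogeneous linear Cauchy problem; in the $G_{\beta}$-term it is moved onto $G_{\beta}$, controlled through the bounds $\int_{\mathbb{R}^{d}}|G_{\beta}(\tau,y)|\,dy\le C\tau^{\beta-1}$ and $\int_{\mathbb{R}^{d}}|\nabla G_{\beta}(\tau,y)|\,dy\le C\tau^{\beta-1-\beta/\alpha}$ of Theorems~\ref{th66}--\ref{th67}, both exponents exceeding $-1$ since $\beta-\beta/\alpha=\beta(\alpha-1)/\alpha>0$ for $\alpha>1$. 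All this bookkeeping is transparent on the Fourier side, where (\ref{f}) reduces for each fixed $p$ to $\hat f(t,p)=\hat f_{0}(p)E_{\beta,1}(-a|p|^{\alpha}t^{\beta})+\int_{0}^{t}(t-s)^{\beta-1}E_{\beta,\beta}(-a(t-s)^{\beta}|p|^{\alpha})\hat h(s,p)\,ds$, which is precisely the solution formula ($(7.3)$--$(7.4)$ in \cite{diethelm}) of the scalar Caputo equation $D^{*\beta}_{0,t}\hat f(t,p)=-a|p|^{\alpha}\hat f(t,p)+\hat h(t,p)$, $\hat f(0,p)=\hat f_{0}(p)$; differentiating the Mittag-Leffler kernels by means of (\ref{betaone}) confirms that the time derivative commutes with the inverse Fourier transform and produces the extra $+h(t,y)$ from the $s\uparrow t$ endpoint. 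Putting this together yields $D^{*\beta}_{0,t}f(t,y)=-a(-\Delta)^{\alpha/2}f(t,y)+h(t,y)=-a(-\Delta)^{\alpha/2}f(t,y)+H(t,y,\nabla f(t,y))$, i.e.\ (\ref{clas}).

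The step I expect to be the main obstacle is exactly this last interchange of the nonlocal operators with the space--time convolution in (\ref{f}): the kernel $G_{\beta}(\tau,y)$ is singular like $\tau^{\beta-1}$ as $\tau\downarrow0$, and the frozen source $h(t,\cdot)$ is only Lipschitz, not $C^{2}$, in $y$, so for $\alpha$ near $2$ one cannot let $(-\Delta)^{\alpha/2}$ act on $h$ directly. The remedy is to differentiate the kernels rather than the data: the fractional Laplacian of the $G_{\beta}$-convolution is estimated through $\int_{\mathbb{R}^{d}}|\nabla G_{\beta}(\tau,y)|\,dy\le C\tau^{\beta-1-\beta/\alpha}$ (Theorem~\ref{th67}), which is integrable near $\tau=0$ precisely because $\alpha>1$, so dominated convergence with these $L^{1}$ kernel bounds justifies differentiating under the integral; the only delicate contribution, the time derivative hitting the upper limit $s=t$, is handled using the integrability of $\tau^{\beta-1}$ near $0$ and is exactly the term that generates the inhomogeneity $+h(t,y)$.
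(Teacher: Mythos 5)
Your overall route is the paper's: freeze the nonlinearity into $h(t,y)=H(t,y,\nabla f(t,y))$, read (\ref{mi}) as the mild form (\ref{f}) of the linear problem, and verify (\ref{clas}) through the Fourier-side equivalence with Diethelm's scalar Caputo formula, using the kernel estimates of Theorems \ref{th66}--\ref{th67}, the hypothesis (\ref{double}) and $f_{0}\in C^{2}_{\infty}$ to make sense of $D^{*\beta}_{0,t}$ and $-(-\Delta)^{\alpha/2}$. That matches the structure of the paper's argument, which likewise reduces the verification to Diethelm's formula after checking (in (\ref{R1})--(\ref{R2})) that the Caputo derivative of each term of the mild form is finite, that $(-\Delta)^{\alpha/2}f$ is pointwise defined because of (\ref{double}), and (by putting one gradient on $G_{\beta}$ and one, via Rademacher, on $H$) that $\nabla^{2}f^{*}$ exists and is continuous.

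However, the step you single out as the main obstacle is resolved incorrectly. You propose to apply $-(-\Delta)^{\alpha/2}$ directly to the Duhamel term and to control ``the fractional Laplacian of the $G_{\beta}$-convolution'' by $\int_{\mathbb{R}^{d}}|\nabla G_{\beta}(\tau,y)|\,dy\le C\tau^{\beta-1-\beta/\alpha}$ together with dominated convergence. This cannot work: the gradient bound controls an operator of order one, while $(-\Delta)^{\alpha/2}$ has order $\alpha\in(1,2]$, and $h(s,\cdot)$ is only Lipschitz, so the missing order $\alpha-1$ cannot be absorbed by the data either. Indeed, by the scaling $G_{\beta}(\tau,y)=\tau^{\beta-1-d\beta/\alpha}\Phi\bigl(y\tau^{-\beta/\alpha}\bigr)$ one gets $\bigl\|(-\Delta)^{\alpha/2}G_{\beta}(\tau,\cdot)\bigr\|_{L^{1}}\sim C\tau^{-1}$, which is not integrable near $\tau=0$; so the term-by-term application of the full fractional Laplacian to the convolution, justified only by $L^{1}$ kernel bounds, fails precisely at the endpoint you identify, and the needed cancellation between the singular parts of $D^{*\beta}_{0,t}$ and $-a(-\Delta)^{\alpha/2}$ is not exhibited by your argument. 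The paper avoids this trap: it never moves $(-\Delta)^{\alpha/2}$ onto $G_{\beta}$ inside the time integral, but only requires $(-\Delta)^{\alpha/2}f$ for the full solution $f$ satisfying (\ref{double}) (where the singular integral converges by a second-order Taylor/symmetrization argument), checks separately that $D^{*\beta}_{0,t}$ of each term of the mild form is finite, and performs the verification of the equation on the Fourier side, where for each fixed $p$ no such divergence arises. To repair your write-up you should either adopt that strategy or supply an argument exploiting the cancellation (e.g.\ working with $a|p|^{\alpha}\hat f(t,p)$ pointwise in $p$ and justifying the inverse transform without taking absolute values of $(-\Delta)^{\alpha/2}G_{\beta}$), rather than appealing to the $\nabla G_{\beta}$ bound.
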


\begin{proof}

Let us define $\Psi_{t}(f)$ as in (\ref{phide}).
%
Firstly, by Diethelm 

\begin{align}
\hat{f}(t,p)=\hat{f}_{0}(p)E_{\beta,1}(-a|p|^{\alpha}t^{\beta}) + \int_{0}^{t}(t-s)^{\beta - 1}E_{\beta, \beta}(-a(t-s)^{\beta}|p|^{\alpha}))\hat{H}(s,y, p)ds,
\end{align}
is equivalent to 
\begin{align}\label{FT}
D^{* \beta}_{0,t}\hat{f}(t,p)=-a|p|^{\alpha}\hat{f}(t,p) + \hat{H}(t,y, \nabla f(t,y)),
\end{align}

which  in turn is equivalent to (\ref{clas}) as its Fourier transform. Also, (\ref{mi}) is equivalent to (\ref{fh}) as its inverse Fourier transform. Therefore (\ref{mi}) is equivalent to (\ref{clas}). We may carry out these equivalence procedures when $D^{* \beta}_{0,t}\Psi_{t}(f)$ and $-(-\Delta)^{\alpha/2}f$ are defined for $f$ satisfying (\ref{double}). 

Due to theorem assumptions:
\begin{align}\label{ash}
|H(s,y, \nabla f(t, \cdot))| \le h + L \sup_{s \in [0,t]}\|\nabla f(t, \cdot)\|_{C^{1}(\mathbb{R}^{d})} < \infty.
\end{align}

So

\begin{align}\label{R1}
D^{* \beta}_{0,t}\left(\int_{0}^{t}\int_{\mathbb{R}^{d}}G_{\beta}(t,y)H(s, y,\nabla f(t,y)) dy ds \right) \nonumber \\
\le \frac{C}{\Gamma[1-\beta]}\int_{0}^{t}(t-s)^{-\beta}s^{\beta}ds 
\le C_{1} \int_{0}^{1}(t-tz)^{-\beta}\beta(tz)^{\beta-1}tdz \nonumber \\
\le C_{1}\beta \int_{0}^{1}(1-z)^{1-\beta - 1}z^{\beta - 1}dz
\le C_{1}\beta B(1-\beta, \beta) < \infty.
\end{align}

Similarly

\begin{align}\label{R2}
D^{* \beta}_{0,t}\int_{\mathbb{R}^{d}}S_{\beta, 1}(t,x-y)f_{0}(y)dy\nonumber \\
\end{align}

exists when $f_{0}(y)$ gives dependence of $\int_{\mathbb{R}^{d}}S_{\beta, 1}(t,x-y)f_{0}(y)dy$ on $t$ such as $t^{k}$, where $k > -1$. This is because
\begin{align}
\int_{0}^{t}(t-s)^{-\beta}\left(\frac{d}{ds}s^{k}\right)ds \nonumber \\
=t^{k + 1 - \beta} \int_{0}^{1}(1-z)^{-\beta}z^{k-1}dz =t^{k + 1 - \beta} B(1-\beta, k + 1),
\end{align}

where for any $\beta \in (0,1)$ the Beta function $B(1-\beta, k + 1)$ is defined for $k + 1 > 0$. Hence, due to (\ref{ash}), (\ref{R1}) and  (\ref{R2}), $D^{* \beta}_{0,t}\Psi_{t}(f)$ is defined for the solution $f$ for (\ref{clas}). For $f$ satisfying (\ref{double}), when $\alpha \in (1, 2]$, $-(-\Delta)^{\alpha/2}f$ is defined. Now, let us study the solution $f^{*}(t,y)$

\begin{align}
f^{*}(t,y)=\int_{0}^{t}\int_{\mathbb{R}^{d}}G_{\beta}(t-s, y-x)H(s,x,\nabla f^{*}(t,x))dx ds  \nonumber \\
+ \int_{\mathbb{R}^{d}}S_{\beta, 1}(t, y-x)f_{0}(x)dx.
\end{align}

Differentiating twice w.r.t. $y$ gives:
\begin{align}\label{fin}
\nabla^{2}\int_{0}^{t}\int_{\mathbb{R}^{d}}G_{\beta}(t-s, y-x)H(s,x,\nabla f^{*}(t,x)) dx ds \nonumber \\
= \int_{0}^{t}\int_{\mathbb{R}^{d}}\nabla_{y} G_{\beta}(t-s, y-x)\nabla_{x} H(s,x,\nabla f^{*}(s,x))dx ds. 
\end{align}

From the representations of $G_{\beta}(t,y)$ and $\nabla G_{\beta}(t,y)$ used in theorems (\ref{th66}), (\ref{th67}) it is clear that $\nabla G_{\beta}(t,y)$ exists and is continuous in $t$ and in $y$. From theorem \ref{th75} we know $\nabla f^{*}$ exists and is Lipschitz continuous. Since we assumed $H$ to be Lipschitz, it follows from Rademacher's theorem that $\nabla_{x} H(s,x,\nabla f^{*}(s,x))$ is almost everywhere defined and bounded. Hence (\ref{fin}) represents a continuous function in $y$ and in $t$. Since $f_{0} \in C^{2}_{\infty}(\mathbb{R}^{d})$ and due to theorem (\ref{th69}) 
\begin{align}
\nabla^{2}\int_{\mathbb{R}^{d}}S_{\beta, 1}(t, y-x)f_{0}(x)dx \nonumber \\
= \int_{\mathbb{R}^{d}}S_{\beta, 1}(t, x)\nabla^{2} f_{0}(y-x) dx < \infty.
\end{align}

Thus, $\nabla^{2}f^{*}(t,y)$ exists and so $f^{*}(t,y) \in C^{2}_{\infty}(\mathbb{R}^{d})$. This completes the necessary requirements for the solution of the mild form (\ref{mi}) to be the solution of (\ref{clas}) of class $C^{2}_{\infty}(\mathbb{R}^{d})$, i.e. a solution in the classical sense.
\end{proof}


\section{Appendix}

Let us recall the asymptotic properties of stable densities defined in (\ref{symm})

\begin{align}\label{heretoo}
g(y, \alpha, \sigma)=\frac{1}{(2\pi)^{d}}\int_{\mathbb{R}^{d}}\exp\{-\sigma |p|^{\alpha}\}e^{-ipy}dp,
\end{align}
see \cite{Kolokoltsov} for details. For $|y|/\sigma^{1/\alpha} \rightarrow 0$ the following asymptotic expansion for $g$ holds

\begin{align}\label{zero}
g(y, \alpha, \sigma) \sim \frac{|S^{d-2}|}{(2\pi \sigma^{1/\alpha})^{d}}\sum_{k=0}^{\infty}\frac{(-1)^{k}}{(2k)!}a_{k}\left(\frac{|y|}{\sigma^{1/\alpha}}\right)^{2k},
\end{align}

where

\begin{align}
a_{k}=\alpha^{-1}\Gamma\left(\frac{2k + d}{\alpha}\right)B\left(k + \frac{1}{2}, \frac{d-1}{2}\right),
\end{align}

where 
\begin{align}
B(q,p)=\int_{0}^{1}x^{p-1}(1-x)^{q}dx = \frac{\Gamma(p)\Gamma(q)}{\Gamma(p + q)}
\end{align}
is the Beta function, and

\begin{align}
|S^{d-2}|= 2 \frac{\pi^{(d-1)/2}}{\Gamma(\frac{d-1}{2})}
\end{align}

and $|S^{0}| = 2$, see \cite{Kolokoltsov} for the proof.

\smallskip

For $|y|/\sigma^{1/\alpha} \rightarrow \infty$ the following asymptotic expansion holds

\begin{align}\label{infinity}
g(y; \alpha, \sigma) \sim (2\pi)^{-(d+1)/2}\frac{2}{|y|^{d}}\sum_{k=1}^{\infty}\frac{a_{k}}{k!}(\sigma |y|^{-\alpha})^{k}
\end{align}

where

\begin{align}
a_{k}=(-1)^{k+1}\sin\left(\frac{k \pi \alpha}{2}\right)\int_{0}^{\infty}\xi^{\alpha k + (d-1)/2}W_{0, \frac{d}{2} - 1}(2\xi)d\xi
\end{align}

and $W_{0,n}(z)$ is the Whittaker function

\begin{align}
W_{0,n}(z)=\frac{e^{-z/2}}{\Gamma(n + 1/2)}\int_{0}^{\infty}[t(1+t/z)]^{n-1/2}e^{-t}dt,
\end{align}

see \cite{Kolokoltsov} for the proof.

\medskip

In case $d=1$ the stable density function $w(x, \beta, 1)$ defined in (\ref{skew}) is infinitely smooth for $x=0$ and $w(x, \beta, 1)=0$ for $x < 0$. Hence $w$ grows at $0$ slower than any power. This gives rise to the inequalities such as $w(x, \beta, 1) < C_{q}x^{q-1}$ for any $q > 1$, for $x < 1$. The property $w(x) \sim x^{-1-\beta}$ for $x >> 1$, may be found for example in \cite{Kolokoltsov}. This may be deduced from the asymptotic expansions in equations $7.7$ and $7.9$ in \cite{Kolokoltsov} with $\gamma=1$.

\smallskip

The following result is part of the proposition 7.3.2 from \cite{Kolokoltsov}:
\begin{prop}\label{pp}
Let

\begin{align}
\phi(y, \alpha, \beta, \sigma)=\frac{1}{(2\pi)^{d}}\int_{\mathbb{R}^{d}}|p|^{\beta}exp\{-i(p,y) - \sigma|p|^{-\alpha}\}dp,
\end{align}
so that
\begin{align}
\frac{\partial \phi}{\partial \beta}(y, \alpha, \beta, \sigma) = \frac{1}{(2\pi)^{d}}\int_{\mathbb{R}^{d}}|p|^{\beta}log|p|exp\{-i(p,y) - \sigma|p|^{-\alpha}\}dp.
\end{align}

Then if $\frac{|y|}{\sigma^{1/\alpha}} \le K$

\begin{align}\label{pro1}
|\phi(y, \alpha, \beta, \sigma)| \le c \sigma^{-\beta/\alpha}g(y, \alpha, \sigma)
\end{align}
and if  $\frac{|y|}{\sigma^{1/\alpha}} > K$

\begin{align}\label{PRO2}
|\phi(y, \alpha, \beta, \sigma)| \le  c \sigma^{-1}|y|^{\alpha - \beta}g(y, \alpha, \sigma),
\end{align}

where $g$ is as in (\ref{heretoo}) and (\ref{symm}).

\end{prop}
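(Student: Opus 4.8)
The plan is to remove the $\sigma$–dependence by scaling and then compare $\phi$, which is essentially $(-\Delta)^{\beta/2}$ applied to the symmetric stable density $g$, with the heavy tail of $g$ in the two ranges separately. Substituting $p=\sigma^{-1/\alpha}q$ in the integral defining $\phi$ and in (\ref{heretoo}) gives, with $z:=\sigma^{-1/\alpha}y$,
\begin{align}
\phi(y,\alpha,\beta,\sigma)=\sigma^{-(d+\beta)/\alpha}\,\phi(z,\alpha,\beta,1),\qquad g(y,\alpha,\sigma)=\sigma^{-d/\alpha}\,g(z,\alpha,1).
\end{align}
Since $|y|/\sigma^{1/\alpha}=|z|$ and $|y|^{\alpha-\beta}=\sigma^{(\alpha-\beta)/\alpha}|z|^{\alpha-\beta}$, the bounds (\ref{pro1})--(\ref{PRO2}) are equivalent to
\begin{align}
|\phi(z,\alpha,\beta,1)|\le c\,g(z,\alpha,1)\ \ (|z|\le K),\qquad |\phi(z,\alpha,\beta,1)|\le c\,|z|^{\alpha-\beta}g(z,\alpha,1)\ \ (|z|>K),
\end{align}
so it suffices to prove these; at $\sigma=1$ the function $\phi(\cdot,\alpha,\beta,1)$ is the inverse Fourier transform of the multiplier $|p|^{\beta}e^{-|p|^{\alpha}}$.

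For $|z|\le K$ I would argue by compactness. Since $\beta>-d$ the integral $\int_{\mathbb R^{d}}|p|^{\beta}e^{-|p|^{\alpha}}dp$ converges, so $|\phi(z,\alpha,\beta,1)|\le\phi(0,\alpha,\beta,1)<\infty$ for every $z$ by the triangle inequality. The symmetric $\alpha$–stable density $g(\cdot,\alpha,1)$ is continuous and everywhere strictly positive, hence $\inf_{|w|\le K}g(w,\alpha,1)=:c_{0}>0$, and (\ref{pro1}) follows with $c=\phi(0,\alpha,\beta,1)/c_{0}$.

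For $|z|>K$ the point is that $\phi$ has a heavier tail than $g$. I would establish $\phi(z,\alpha,\beta,1)=c_{\beta}|z|^{-d-\beta}\bigl(1+O(|z|^{-\alpha})\bigr)$ as $|z|\to\infty$ by the same device used for the tail expansion (\ref{infinity}) of $g$ in \cite{Kolokoltsov}: write $|p|^{\beta}e^{-|p|^{\alpha}}=|p|^{\beta}-|p|^{\beta}\bigl(1-e^{-|p|^{\alpha}}\bigr)$; the inverse Fourier transform of the homogeneous symbol $|p|^{\beta}$ is the Riesz kernel $c_{d,\beta}|z|^{-d-\beta}$ (for the values of $\beta$ relevant here, notably $\beta=1$; these are not nonnegative even integers, so no logarithmic correction appears), while $|p|^{\beta}\bigl(1-e^{-|p|^{\alpha}}\bigr)$ vanishes like $|p|^{\beta+\alpha}$ at the origin and is smooth elsewhere, so its inverse transform is $O(|z|^{-d-\beta-\alpha})$. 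Comparing with $g(z,\alpha,1)=c'|z|^{-d-\alpha}\bigl(1+O(|z|^{-\alpha})\bigr)$ from (\ref{infinity}) yields $|\phi(z,\alpha,\beta,1)|/g(z,\alpha,1)\le C|z|^{\alpha-\beta}\bigl(1+o(1)\bigr)$, i.e. (\ref{PRO2}) for all $|z|$ beyond some $K_{1}$; on the compact set $K\le|z|\le K_{1}$ one uses once more the continuity of $\phi$, the positivity of $g$, and the boundedness of $|z|^{\alpha-\beta}$ from below. Undoing the scaling finishes the proof. A more hands-on route for this range is to use $(-\Delta)^{\beta/2}g(z)=c_{d,\beta}\,\mathrm{p.v.}\!\int\frac{g(z)-g(z+h)}{|h|^{d+\beta}}\,dh$ together with the pointwise bounds on $g,\nabla g,\nabla^{2}g$ read off from (\ref{zero})--(\ref{infinity}); the dominant contribution comes from $|h|\sim|z|$, where $g(z+h)=O(1)$ and $|h|^{-d-\beta}\sim|z|^{-d-\beta}$, again producing the $|z|^{-d-\beta}$ tail.

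The main obstacle is this outer estimate: one must make the tail asymptotics of $\phi$ genuinely uniform -- justifying the inverse Fourier transform of the homogeneous distribution $|p|^{\beta}$ (by a regularisation near $p=0$, or the contour rotation used for (\ref{infinity}) in \cite{Kolokoltsov}) and controlling the remainder uniformly in $z$ -- and one should separately check the borderline case $\beta=\alpha$, where $|z|^{\alpha-\beta}\equiv1$ and the two bounds coincide, and the case $d=1$, where the spherical-mean reduction underlying (\ref{zero})--(\ref{infinity}) takes a slightly different form than for $d\ge2$. The scaling reduction and the inner estimate are routine.
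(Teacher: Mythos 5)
The paper itself contains no proof of this proposition: it is quoted as ``part of Proposition 7.3.2'' of Kolokoltsov's monograph, so there is no in-paper argument to compare against, and your proposal is in effect supplying the missing proof. Your outline is sound and follows the route one would expect the cited source to take. The scaling step is correct: with $z=\sigma^{-1/\alpha}y$ one has $\phi(y,\alpha,\beta,\sigma)=\sigma^{-(d+\beta)/\alpha}\phi(z,\alpha,\beta,1)$ and $g(y,\alpha,\sigma)=\sigma^{-d/\alpha}g(z,\alpha,1)$, so (\ref{pro1})--(\ref{PRO2}) reduce exactly to $|\phi(z,\alpha,\beta,1)|\le c\,g(z,\alpha,1)$ for $|z|\le K$ and $|\phi(z,\alpha,\beta,1)|\le c\,|z|^{\alpha-\beta}g(z,\alpha,1)$ for $|z|>K$. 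The inner bound via $|\phi(z)|\le\phi(0)<\infty$ (for $\beta>-d$) and $\inf_{|z|\le K}g(z,\alpha,1)>0$ is fine, and the outer bound by comparing the $|z|^{-d-\beta}$ tail of $\phi$, produced by the $|p|^{\beta}$ singularity of the symbol at the origin, with the $|z|^{-d-\alpha}$ tail of $g$ from (\ref{infinity}) is the right idea; you also correctly identify that the only substantive work is making that tail estimate rigorous and uniform.

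Two caveats you should address when writing this up. First, the step ``$|p|^{\beta}(1-e^{-|p|^{\alpha}})$ vanishes like $|p|^{\beta+\alpha}$ at the origin and is smooth elsewhere, so its inverse transform is $O(|z|^{-d-\beta-\alpha})$'' is not yet an argument: that symbol grows like $|p|^{\beta}$ at infinity, so its inverse transform is only a distribution, and the asserted decay (valid away from $z=0$) requires the standard localization near $p=0$ plus integration by parts or a dyadic decomposition, or the contour-rotation derivation you allude to; this is routine, and you flag it, but it is where the proof actually lives. Second, the outer bound is genuinely false for $\alpha=2$ and non-even $\beta$, since then $g$ is Gaussian while $\phi$ keeps a power tail $|z|^{-d-\beta}$; the proposition must therefore be read with $\alpha\in(0,2)$, which is consistent with the paper (it invokes the result only in Theorem \ref{th70}, for $\alpha\in(1,2)$), but your write-up should state this restriction explicitly rather than only the borderline checks $\beta=\alpha$ and $d=1$. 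Finally, note that you have silently corrected the misprint $\sigma|p|^{-\alpha}$ in the displayed statement to $\sigma|p|^{\alpha}$, which is clearly what is intended.
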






\section*{Acknowledgements}

The authors would like to thank BCAM, G. Pagnini and E. Scalas for organizing the BCAM FCPNLO 2013 workshop where this work was discussed. This work is supported by the IPI RAN grants RFBR 11-01-12026 and 12-07-00115,
and by the grant 4402 of the Ministry of Education and Science of Russia, and by EPSRC grant EP/HO23364/1 through MASDOC, University of Warwick, UK.

\bibliographystyle{alpha}
\bibliography{WellposednessA2}

\end{document}